\newtheorem{theorem}{Theorem}
\newtheorem{lemma}[theorem]{Lemma}
\theoremstyle{definition}
\newtheorem{definition}[theorem]{Definition}
\newtheorem{cor}[theorem]{Corollary}
\theoremstyle{remark}
\newtheorem{remark}[theorem]{Remark}
\numberwithin{equation}{section}
\newcommand{\Hi}{\mathbb{H}}
\newcommand{\Sf}{\mathbb{S}}
\newcommand{\R}{\mathbb{R}}
\newcommand{\Lo}{\mathbb{L}}
\newcommand{\E}{\mathbb{E}}
\newcommand{\Q}{Q^{n}(\varepsilon)}
\begin{document}

\title{Intrinsic and extrinsic geometry of hypersurfaces in $\mathbb{S}^n \times \mathbb{R}$ and $\mathbb{H}^n \times \mathbb{R}$}

%    Information for first author
\author{Rafael Novais}
%    Address of record for the research reported here
\address{Departamento de Matem\'atica, Universidade de
Bras\'ilia, 70910-900, Bras\'ilia-DF, Brazil}
%    Current address
\email{r.m.novais@mat.unb.br}
%    \thanks will become a 1st page footnote.
\thanks{The first author was supported by CNPq}

%    Information for second author
\author{Jo\~ao Paulo dos Santos}
%    Address of record for the research reported here
\address{Departamento de Matem\'atica, Universidade de
Bras\'ilia, 70910-900, Bras\'ilia-DF, Brazil}
%    Current address
\email{j.p.santos@mat.unb.br}
\thanks{The second author was supported by FAPDF 0193.001346/2016}

%    General info
\subjclass[2010]{53B25; 53C42}

%\date{January 1, 2001 and, in revised form, June 22, 2001.}
%
%\dedicatory{This paper is dedicated to our advisors.}

\keywords{hypersurfaces in product spaces, conformally flat, radially flat, rotation, semi-parallel}

\begin{abstract}
In this paper, geometric characterizations of conformally flat and radially flat hypersurfaces in $\mathbb{S}^n \times \mathbb{R}$ and $\mathbb{H}^n \times \mathbb{R}$ are given by means of their extrinsic geometry. Under suitable conditions on the shape operator, we classify conformally flat hypersurfaces in terms of rotation hypersurfaces. In addition, a close relation between radially flat hypersurfaces and semi-parallel hypersurfaces is established. These results lead to geometric descriptions of hypersurfaces with special intrinsic structures, such as Einstein metrics, Ricci solitons and hypersurfaces with constant scalar curvature. 

\end{abstract}

\maketitle

\section{Introduction}

Hypersurfaces in the product spaces $\Hi^n \times \R$ and $\Sf^n \times \R$ have attracted a lot of attention in recent years. Properties regarding their intrinsic geometry and the relations with the ambient space have been considered mainly in the context of constant sectional curvature. In \cite{aledo1} and \cite{aledo2} the two dimensional case is considered and surfaces with constant Gaussian curvature are classified. For higher dimensions, hypersurfaces with constant secctional curvature were considered in \cite{manfio}, where the authors have proved that those hypersurfaces must be an open part of a rotation hypersurface. Such invariant property were introduced in \cite{Veken3} and since then, they have been playing an important role in the classification of hypersurfaces in $\Hi^n \times \R$ and $\Sf^2 \times \R$ with special geometric aspects, such as totally umbilicity, parallelism and semi-parallelism, as we can see in \cite{Veken1} and \cite{Veken2}.

A Riemannian manifold is conformally flat if each point has a neighborhood where the metric is conformal to a flat metric, i.e., a metric with zero sectional curvature. The investigation of conformally flat hypersurfaces in Riemannian manifolds, equipped with the induced metric, has been of interest for some time (see \cite{lafontaine, jeromin, joao} and the references therein). Since the problem of their classification proposed by Cartan in \cite{cartan}, the relationship between the intrinsic and extrinsic geometry has been considered by taking into account the geometry of the ambient space. For instance, when the ambient manifold is also conformally flat, Nishikawa and Maeda \cite{Nishikawa} have proved that $n$-dimensional conformally flat hypersurfaces must be \emph{quasi-umbilical}, i.e., one of the the principal curvatures has multiplicity at least $(n-1)$. In our case, we will see that rotation hypersurfaces are conformally flat. Conversely, conformally flat hypersurfaces, with additional conditions on the shape operator, are given by rotation hypersurfaces (Theorem \ref{conformally}). As applications, we will use this result to deal with important Riemannian structures on hypersurfaces in $\Hi^n \times \R$ and $\Sf^2 \times \R$, namely, the Einstein metrics and the Ricci solitons. We also characterize the conformally flat hypersurfaces with constant scalar curvature (see Section 5).

On the other hand, radially flat Riemannian manifolds are the manifolds endowed with a smooth vector field $X$ where the sectional curvatures vanish along planes that contained the vector field $X$. Radially flat Riemannian manifolds constitute an important class of metrics and were considered, for example, in the context of Ricci solitons \cite{petersen1, garciario, petersen2}. In this case, the vector field considered is the potential vector field of the soliton. It turns out that the radially flat condition can be seen, in some sense, as a weakening of the flatness condition and, consequently, more information about such metrics can be obtained. This situation will be seen in our context as a generalization of a result given in \cite{Veken3} for intrinsically flat rotation hypersurfaces in $\Sf^n \times \R$ and $\Hi^n \times \R$. Our main result regarding radially flat hypersurfaces is a close relation between the  geometry of radially flat hypersurfaces and the geometry of semi-parallel hypersurfaces in such spaces (Theorem \ref{radially}).

The paper is organized as follows: in Section 2, the main results presented. Section 3 is a brief section with preliminaries for the study of hypersurfaces in $Q^n(\varepsilon) \times\mathbb{R}$. The tools and basic results that will be used in the proofs of the main results will be given in this section. In Section 4, the proofs of the main results are given. Finally, in Section 5, we give some interesting applications as corollaries of the main results. 

\section{Statement of the main results}

In order to state our results, let us first establish some notation. Let $Q^n(\varepsilon)$ be the unit sphere $\Sf^n$, if $\varepsilon=1$, or the hyperbolic space and $\Hi^n$ if $\varepsilon=-1$ and consider the manifold  $Q^{n}(\varepsilon)\times\mathbb{R}$ given by:
$$
\begin{array}{rcl}
        {\Sf}^n \times \R &=& \{(x_1,\ldots,x_{n+2})\in\E^{n+2}|\;x_1^2+x_2^2+\ldots+x_{n+1}^2=1\}, \\
        {\Hi}^n \times \R &=& \{(x_1,\ldots,x_{n+2})\in\Lo^{n+2}|-x_1^2+x_2^2+\ldots+x_{n+1}^2=-1, x_1>0\},
\end{array}
$$
with the metric induced by the ambient space, where $\E^{n+2}$ is the $(n+2)-$dimensional Euclidean space and $\Lo^{n+2}$ is the $(n+2)-$dimensional Lorentzian space with the canonical metric $ds^2=-dx_1^2+dx_2^2+ \ldots +dx_{n+2}^2$.

Let $M^n$ be a hypersurface in $Q^{n}(\varepsilon)\times\mathbb{R}$  with unit normal $N$ and let $\partial_{x_{n+2}}$ be the coordinate vector field of the second factor $\R$. The orthogonal projection of  $\partial_{x_{n+2}}$ over the tangent space of  $M^n$ will be denoted by $T$. Also, let $\theta$ be the angle function between $N$ and  $\partial_{x_{n+2}}$. Then we have the following decomposition
$$\partial_{x_{n+2}}=T+\cos{\theta}N.$$

In this context, we have our first main result: 

\begin{theorem} \label{conformally}
Let $M^n$, $n>3$, be a hypersurface in $Q^n(\varepsilon) \times \mathbb{R}$. If $M^n$ is a rotation hypersuface, then $M^n$ is conformally flat. Conversely, if $M^n$ is a conformally flat hypersurface, then either $M^n$ is a totally umbilical hypersurface or its shape operator has two distinct eigenvalues of multiplicity $n-1$ and $1$. In this case, $M^n$ is locally congruent to a rotation hypersurface when one of following cases occurs:
\begin{enumerate}[i)]
\item $M^n$ is a totally umbilical hypersurface, which is not totally geodesic;
\item the shape operator of $M^n$ has two distinct eingevalues $\lambda$ and $\mu$, of multiplicity $1$ and $n-1$, respectively, where $\lambda=\lambda(\mu, \theta)$ and the vector field  $T$ is a principal direction.
\end{enumerate}
\end{theorem}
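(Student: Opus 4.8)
The plan is to reduce everything to the Gauss equation together with the fact that, since $n>3$, $M^n$ is conformally flat if and only if its Weyl tensor vanishes, equivalently its curvature tensor has the form
$$R(X,Y)Z=\langle Y,Z\rangle LX-\langle X,Z\rangle LY+\langle LY,Z\rangle X-\langle LX,Z\rangle Y$$
for some self-adjoint operator $L$ (necessarily the Schouten tensor). First I would record the ambient contribution: writing the curvature of $\Q\times\R$ in terms of $\partial_{x_{n+2}}$ and projecting onto $M^n$, one checks that its tangential part is already of this form, with $L_0X=\varepsilon\big(\tfrac12 X-\langle X,T\rangle T\big)$, so it has vanishing Weyl part. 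Hence, by the Gauss equation, the Weyl tensor of $M^n$ coincides with the Weyl part of $B(X,Y)Z:=\langle SY,Z\rangle SX-\langle SX,Z\rangle SY$, where $S$ is the shape operator. If $M^n$ is a rotation hypersurface then, by the description recalled in Section 3, $T$ is a principal direction and $S$ has at most two eigenvalues, i.e. $S=\mu\,\mathrm{Id}+(\lambda-\mu)P$ with $P$ the orthogonal projection onto $T$; a direct computation then shows that $B$ is again of the displayed form (the rank-one excess of $S$ contributes only terms built from $\langle\,\cdot\,,T\rangle$), so $R$ is too, and $M^n$ is conformally flat.

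Conversely, if $M^n$ is conformally flat then its Weyl tensor vanishes, so by the above $B$ has vanishing Weyl part; equivalently $B(X,Y)Z=\langle Y,Z\rangle L'X-\langle X,Z\rangle L'Y+\langle L'Y,Z\rangle X-\langle L'X,Z\rangle Y$ for some self-adjoint $L'$. The crucial algebraic step is to extract the eigenvalue structure of $S$. The Ricci contraction of $B$ equals $(\mathrm{tr}\,S)\,S-S^{2}$, which is diagonal in any eigenbasis of $S$; hence $L'$ is diagonal in that basis, and comparing sectional components forces $\lambda_i\lambda_j=\ell_i+\ell_j$ for all $i\neq j$, where $\lambda_i$ and $\ell_i$ are the eigenvalues of $S$ and $L'$. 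Analyzing this system, if two of the $\lambda_i$ differ then all remaining ones must equal a common value $\rho$, and comparing the leftover equations yields $(\lambda_i-\rho)(\lambda_j-\rho)=0$, forcing one of the two to equal $\rho$ as well. Thus $S$ is either umbilical or has exactly two eigenvalues, of multiplicities $n-1$ and $1$, which is the structural conclusion.

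To obtain local congruence with a rotation hypersurface in the two listed cases I would proceed geometrically. In case (i), the non-totally-geodesic umbilical hypersurfaces of $\Q\times\R$ are rotational by the classification recalled in Section 3. In case (ii), with $T$ principal and $S$ carrying eigenvalues $\lambda$ (multiplicity one, along $T$) and $\mu$ (multiplicity $n-1$), I would use the Codazzi equation to show that the $\mu$-eigendistribution $T^{\perp}$ is integrable; since the height function has gradient $T$, its leaves lie in the slices $\Q\times\{t\}$, and Codazzi forces them to be umbilical there, hence geodesic spheres. These spheres are the orbits of the rotation group, so a reduction-of-codimension argument exhibits $M^n$ as the hypersurface generated by a profile curve, and the fundamental theorem for hypersurfaces in $\Q\times\R$ provides the congruence. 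The hypothesis $\lambda=\lambda(\mu,\theta)$ enters precisely here, as the first integral making the Codazzi system compatible with this rotational structure.

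The forward direction and the algebra of the structural converse should be routine once the Ricci contraction of $B$ is in hand. I expect the main obstacle to be case (ii) of the classification converse, namely passing from the pointwise data $(S,T,\theta)$ to a global rotational symmetry. The delicate points are verifying that the leaves of $T^{\perp}$ are genuine geodesic spheres sharing a common axis, and that the functional relation $\lambda=\lambda(\mu,\theta)$ is exactly the integrability condition of the profile-curve equation, so that the fundamental theorem applies and yields an honest rotation hypersurface rather than merely a hypersurface with rotationally symmetric invariants.
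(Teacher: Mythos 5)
Your forward direction and the structural dichotomy are correct, and they take a more self-contained route than the paper: you re-derive the quasi-umbilicity criterion by computing the Weyl part of the Gauss equation (your $L_0$ is the right operator for the ambient term, and the $\lambda_i\lambda_j=\ell_i+\ell_j$ analysis is the standard argument for $n>3$), whereas the paper simply observes that $Q^n(\varepsilon)\times\mathbb{R}$ is conformally flat and cites the Nishikawa--Maeda theorem (Theorem \ref{nishikawa-maeda}). Case (i) is handled identically in both, by the classification of totally umbilical hypersurfaces.

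The gap is in case (ii). What you offer there is a strategy for reproving the rotation criterion of Dillen--Fastenakels--Van der Veken (Theorem \ref{criterion}) from first principles --- integrability of the $\mu$-eigendistribution, umbilicity of its leaves, reduction of codimension, fundamental theorem --- and you yourself flag the two hardest steps (that the leaves share a common axis, and that $\lambda=\lambda(\mu,\theta)$ is ``exactly the integrability condition'') as delicate points you have not carried out. These are not routine: without the functional relation the umbilical leaves need not fit together into the orbit foliation of a single isometry group, and in $\Hi^n\times\mathbb{R}$ the umbilical leaves need not be geodesic spheres at all (horospheres and equidistant hypersurfaces occur, corresponding to the parabolic and hyperbolic rotation types of Definition \ref{defi-rotational}), so ``hence geodesic spheres'' is already inaccurate as stated. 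Moreover, the criterion as established in the literature requires a relation $\lambda=\lambda(\mu)$; to get away with the weaker hypothesis $\lambda=\lambda(\mu,\theta)$ one must verify that $\theta$ does not vary in directions orthogonal to $T$ (Remark \ref{rem-criterion}). That short but essential verification is entirely absent from your argument: since $ST=\lambda T$, equation (\ref{Xcos}) gives $X[\cos\theta]=-\langle X,ST\rangle=-\lambda\langle X,T\rangle=0$ for $X\perp T$. The paper closes case (ii) by combining Tojeiro's normal form (\ref{function-tojeiro}) (which exhibits the leaves as a parallel family and pins down the principal curvatures), the citation of Theorem \ref{criterion} with Remark \ref{rem-criterion}, and precisely this computation; your proposal would need either to import that criterion or to actually execute the reduction it sketches.
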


It is important to note that the hypothesis of the vector field $T$ being an eigenvalue of the shape operator is not very restrictive, nor artificial. In fact, hypersurfaces in $Q^n(\varepsilon) \times \R$ with such feature were considered by Tojeiro in \cite{tojeiro} (see Section 3). As pointed out by the author, this class of hypersurfaces includes all rotation hypersurfaces, all hypersurfaces with constant sectional curvature, with dimension $ n\geq 3$ \cite{manfio}, and also all constant angle hypersurfaces, that is, hypersurfaces with the property that its unit normal vector field makes a constant angle with the unit vector field $\partial_{x_{n+2}}$ \cite{manfio, tojeiro, dillen2, dillen3}. Besides that, Tojeiro has shown that $T$ is a principal direction for a hypersurface $M^n$ in $Q^n(\varepsilon) \times \R$ if, and only if, $M^n$ has flat normal bundle as a submanifold into $\E^{n+2}$, resp. $\Lo^{n+2}$. In general, a hypersurface $M^n \subset \tilde{M}^{n+1}$ is said to have a canonical principal direction relative
to a vector field $X$ in $M^n \subset \tilde{M}^{n+1}$  if the projection of $X$ over the tangent space of the hypersurface is an eigenvector of the shape operator. Hypersurfaces with such property was considered in \cite{garnica} and also in \cite{Veken4, dillen}, in the context of surfaces in $Q^n(\varepsilon) \times \R$.

The second part of Theorem \ref{conformally}, item ii), is a result similar to the classification of $n$-dimensional conformally flat hypersurfaces in space forms, $n>3$, obtained by do Carmo and Dajczer in \cite{carmo}. The authors proved that, if the shape operator of the hypersurface has two distinct eigenvalues $\lambda$ and $\mu$, with multiplicity $1$ and $n-1$, with $\lambda \neq 0$ and $\mu=\mu(\lambda)$, then the hypersurface is contained in a rotation hypersurface \cite[Corollary 4.2]{carmo}.

Now we turn our attention to radially flat hypersurfaces in $Q^n(\varepsilon) \times \R$. Although Dillen, Fastenakels and van der Veken have shown in \cite{Veken3} that there is no intrinsically  flat rotation hypersurfaces in $Q^n(\varepsilon) \times \mathbb{R}$, when $n \geq 3$, it is possible to weaken such a hypothesis to obtain rotation hypersurfaces with an interesting geometric property. Therefore, instead of flat metrics, we will ask for zero sectional curvature along specific planes. A hypersurface $M^n$ in $Q^n(\varepsilon) \times \mathbb{R}$ will be called radially flat if the sectional curvatures along planes containing the vector field $T$ vanish, i.e., $K_M(T,X)=0$, for any vector field $X$. 

We will show a close relation between radially flat hypersurfaces and semi-parallel hypersurfaces in $Q^n(\varepsilon) \times \mathbb{R}$, i.e., hypersurfaces where the second fundamental form $h$ and the curvature tensor $R$ satisfy
$
h(R(X,Y)Z,W) + h(R(X,Y)W,Z)=0,
$
for every $X, \, Y, \, Z, \, W$ arbitrary vector fields tangent to $M^n$. Our result will provide an important intrinsic characterization for such hypersurfaces that were classified in  \cite{Veken1} and \cite{Veken2}. 

Firstly, we remark that radially flat surfaces in $Q^2(\varepsilon) \times \mathbb{R}$ are the flat surfaces and a surface in $Q^2(\varepsilon) \times \mathbb{R}$ is flat if, and only if, is semi-parallel. In fact, it follows directly from the definition of semi-parallel hypersurfaces that every flat surface should be semi-parallel. Conversely, as we can see in \cite{Veken1, Veken2}, every semi-parallel surface is flat. This observation is generalized for higher dimensions by the following theorem:

\begin{theorem} \label{radially}
Let $M^n$, $n>3$, be a hypersurface in $Q^n(\varepsilon) \times \mathbb{R}$. If $M^n$ is radially flat and $T$ is a principal direction, for a principal curvature $\lambda \neq 0$, then $M^n$ is a semi-parallel, rotation hypersurface. Conversely, if $M^n$ is a semi-parallel, not totally umbilical hypersurface, then $M^n$ is radially flat.
\end{theorem}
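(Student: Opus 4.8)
The plan is to run both implications through the Gauss equation, exploiting the decomposition $\partial_{x_{n+2}}=T+\cos\theta\,N$ and the fact that the curvature tensor $\bar R$ of $\Q\times\R$ only sees the parts of the vectors orthogonal to $\partial_{x_{n+2}}$, namely $\bar R(X,Y)Z=\varepsilon(\langle Y^{*},Z^{*}\rangle X^{*}-\langle X^{*},Z^{*}\rangle Y^{*})$, where $V^{*}:=V-\langle V,\partial_{x_{n+2}}\rangle\partial_{x_{n+2}}$. For tangent $X$ one has $\langle X,\partial_{x_{n+2}}\rangle=\langle X,T\rangle$ and $|T|^{2}=\sin^{2}\theta$, so $\langle T^{*},T^{*}\rangle=\sin^{2}\theta\cos^{2}\theta$, while $\langle X^{*},X^{*}\rangle=|X|^{2}$ and $\langle T^{*},X^{*}\rangle=0$ for $X\perp T$. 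For the direct implication I assume $M^{n}$ radially flat with $ST=\lambda T$, $\lambda\neq0$, so that the Gauss equation yields, for every $X\perp T$,
\[
\langle R(T,X)X,T\rangle=\varepsilon\,|T|^{2}\cos^{2}\theta\,|X|^{2}+\lambda\,|T|^{2}\,\langle SX,X\rangle .
\]
Radial flatness kills the left-hand side, and since $T\neq0$ away from the totally geodesic slices (which are totally umbilical, hence excluded), I get $\langle SX,X\rangle=-\tfrac{\varepsilon\cos^{2}\theta}{\lambda}|X|^{2}$ for all $X\perp T$. As $S$ is symmetric and preserves $T^{\perp}$, this forces $S|_{T^{\perp}}=\mu\,\mathrm{Id}$ with $\mu=-\varepsilon\cos^{2}\theta/\lambda$; thus $S$ has the principal curvatures $\lambda$ (simple, along $T$) and $\mu$ (multiplicity $n-1$), tied by $\lambda\mu=-\varepsilon\cos^{2}\theta$.

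Next I would extract semi-parallelism and the rotational structure. Working in an orthonormal eigenbasis $\{e_{1}=T/|T|,e_{2},\dots,e_{n}\}$, a short Gauss computation shows that the only components of $R$ that could mix the eigenspaces $\mathrm{span}(T)$ and $T^{\perp}$ are $\langle R(e_{m},e_{1})e_{1},e_{m}\rangle=\varepsilon\cos^{2}\theta+\lambda\mu$ and its symmetries, and these vanish precisely because $\lambda\mu=-\varepsilon\cos^{2}\theta$. Hence each $R(X,Y)$ preserves the eigenspaces of $S$, i.e.\ $[S,R(X,Y)]=0$, which is exactly the semi-parallel condition $h(R(X,Y)Z,W)+h(R(X,Y)W,Z)=0$. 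Finally, since $\Q\times\R$ is conformally flat and $M^{n}$ is quasi-umbilical ($\mu$ has multiplicity $n-1\geq3$), the hypersurface $M^{n}$ is conformally flat; as $T$ is a principal direction and $\lambda=\lambda(\mu,\theta)$, Theorem \ref{conformally}(ii) gives that $M^{n}$ is locally congruent to a rotation hypersurface, the degenerate subcase $\lambda=\mu$ being covered by Theorem \ref{conformally}(i).

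For the converse I start only from $[S,R(X,Y)]=0$, equivalently $\langle R(X,Y)e_{k},e_{l}\rangle=0$ whenever $e_{k},e_{l}$ lie in distinct eigenspaces of $S$. Diagonalising $S$ and writing $T=\sum_{a}t_{a}e_{a}$, the Gauss equation reads
\[
\langle R(e_{i},e_{j})e_{k},e_{l}\rangle=\varepsilon\big[(\delta_{jk}-t_{j}t_{k})(\delta_{il}-t_{i}t_{l})-(\delta_{ik}-t_{i}t_{k})(\delta_{jl}-t_{j}t_{l})\big]+\lambda_{i}\lambda_{j}(\delta_{il}\delta_{jk}-\delta_{ik}\delta_{jl}).
\]
The crucial step, which I expect to be the main obstacle, is to prove that $T$ must be a principal direction. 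Suppose $T$ had nonzero components $t_{p},t_{q}$ in two different eigenspaces. Using $n\geq4$ I produce a third direction $e_{r}$ with $t_{r}=0$; since $e_{r}$ cannot share its eigenvalue with both $e_{p}$ and $e_{q}$, one of the mixing components $\langle R(e_{q},e_{r})e_{r},e_{p}\rangle$ or $\langle R(e_{p},e_{r})e_{r},e_{q}\rangle$ is required to vanish, and the formula above evaluates each of them to $-\varepsilon\,t_{p}t_{q}$, forcing $t_{p}t_{q}=0$, a contradiction. Hence all nonzero components of $T$ share one eigenvalue and, after rotating within that eigenspace, $T$ is an eigenvector, $ST=\lambda T$.

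Once $T$ is principal, semi-parallelism applied to $\langle R(e_{m},e_{1})e_{1},e_{m}\rangle=\varepsilon\cos^{2}\theta+\lambda\mu_{m}=0$ makes every eigenvalue $\mu_{m}$ of $S|_{T^{\perp}}$ different from $\lambda$ equal to $\mu=-\varepsilon\cos^{2}\theta/\lambda$. Moreover, for $e_{2}\in T^{\perp}$ with eigenvalue $\lambda$ and $e_{m}\in T^{\perp}$ with eigenvalue $\mu\neq\lambda$, the component $\langle R(e_{2},e_{m})e_{m},e_{2}\rangle=\varepsilon+\lambda\mu$ must also vanish, which together with $\lambda\mu=-\varepsilon\cos^{2}\theta$ gives $\varepsilon\sin^{2}\theta=0$, impossible since $T\neq0$. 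As $M^{n}$ is not totally umbilical such a direction $\mu\neq\lambda$ exists, so $S|_{T^{\perp}}$ has the single eigenvalue $\mu$ with $\lambda\mu=-\varepsilon\cos^{2}\theta$. Substituting into the displayed expression for $\langle R(T,X)X,T\rangle$ makes every sectional curvature $K_{M}(T,X)$ vanish, so $M^{n}$ is radially flat. (The case $T\equiv0$ on an open set is the excluded totally geodesic slice, and the case $\lambda=0$ forces $\cos\theta=0$, whence radial flatness is immediate.)
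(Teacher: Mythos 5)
Your argument is correct, and the two halves relate to the paper's proof differently. The direct implication is essentially the paper's: both extract $\lambda\mu=-\varepsilon\cos^{2}\theta$ from the Gauss equation applied to planes containing $T$, and both then verify semi-parallelism and invoke the rotation criterion (you route the last step through Theorem \ref{conformally}~(ii) and quasi-umbilicity, the paper goes directly to Theorem \ref{criterion} and Remark \ref{rem-criterion}; these are equivalent since the former is proved via the latter). Your verification of $R\cdot h=0$ via the commutator reformulation $[S,R(X,Y)]=0$ is a cleaner packaging of the paper's index-by-index computation of (\ref{eq-semi-parallel}). The converse is where you genuinely diverge: the paper simply cites Lemma \ref{lema-semi}, the classification of shape operators of semi-parallel hypersurfaces from the literature, and reads off $ST=\lambda T$ and $\lambda\mu=-\varepsilon\cos^{2}\theta$ (or $\lambda=\cos\theta=0$), whereas you re-derive exactly this normal form from scratch out of $[S,R(X,Y)]=0$ and the Gauss equation. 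This makes the proof self-contained at the cost of redoing work that is already in \cite{Veken1,Veken2}; it also exposes the one soft spot in your write-up. Your elimination of the case where $T$ has components in two eigenspaces requires a third index $r$ with $t_{r}=0$, and the sentence ``using $n\geq 4$ I produce a third direction $e_{r}$ with $t_{r}=0$'' is not justified as stated (if $S$ had $n$ simple eigenvalues and $T$ had nonzero components in all eigenspaces, no such $e_{r}$ exists a priori). Fortunately the hypothesis is superfluous: for any three distinct indices the Gauss equation gives
$$
\langle R(e_{q},e_{r})e_{r},e_{p}\rangle=\varepsilon\bigl[(1-t_{r}^{2})(-t_{q}t_{p})-(-t_{q}t_{r})(-t_{r}t_{p})\bigr]=-\varepsilon\,t_{q}t_{p},
$$
independently of $t_{r}$, so your contradiction $t_{p}t_{q}=0$ goes through for an arbitrary third direction. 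With that repair the converse is complete, and the remaining steps (ruling out a $\lambda$-eigenvector inside $T^{\perp}$ via $\varepsilon+\lambda\mu=0$, and the degenerate cases $T\equiv 0$ and $\lambda=0$) agree in substance with the paper's case analysis.
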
	

\section{Hypersurfaces in $\mathbb{S}^n\times\mathbb{R}$ and $\mathbb{H}^n\times\mathbb{R}$}

%Pela forma como os modelos foram definidos, é possivel verificar que $\xi=(x_1,\ldots,x_{n+1},0)$ é um campo de vetores normal sobre $Q(\varepsilon)^{n}\times\R$ com $\langle\xi,\xi\rangle=\varepsilon$.

Let $\nabla$, $R$, $S$ be the Riemannian connection, the curvature tensor and the shape operator of  a hypersurface $M^n$ in $Q^n(\varepsilon) \times \R$, respectively. The Gauss and Codazzi equations are given by

    \begin{multline}\label{gauss}
        \langle R(X,Y)Z,W\rangle=\varepsilon(\langle X,W\rangle\langle Y,Z\rangle-\langle X,Z\rangle\langle Y,W\rangle\\
       + \langle X,T\rangle\langle Z,T\rangle\langle Y,W\rangle+\langle Y,T\rangle\langle W,T\rangle\langle X,Z\rangle\\
        -\langle Y,T\rangle\langle Z,T\rangle\langle X,W\rangle-\langle X,T\rangle\langle W,T\rangle\langle Y,Z\rangle)\\
        +\langle SX,W\rangle\langle SY,Z\rangle-\langle SX,Z\rangle\langle SY,W\rangle.
    \end{multline}
    
    \begin{equation}\label{codazzi}
         \nabla_{X}(SY)-\nabla_{Y}(SX)-S[X,Y]=\varepsilon\cos\theta[\langle Y,T\rangle X-\langle X,T\rangle Y].
    \end{equation}   
    
Since the vector field $\partial_{x_{n+2}}$ is parallel in $\Q\times\R$, one has
    \begin{equation}\label{Xcos}
    \begin{array}{rcl}
        \nabla_{X}T&=&\cos(\theta) SX, \\
           X[\cos\theta]&=&-\langle X,ST\rangle.
     \end{array}      
    \end{equation}
    
Let $h$ be the second fundamental form of $M^n$ given by $h(X,Y) = \langle SX, Y \rangle$. If $h(X,Y) = \lambda \langle X, Y \rangle$, for some smooth function $\lambda$ defined on $M^n$, $M^n$ is called \emph{totally umbilical} and $M^{n}$ is called \emph{totally geodesic} if $h \equiv 0$. Besides that, $M^n$ is called \emph{semi-parallel} if $R \cdot h = 0$, where 
\begin{equation}
(R \cdot h) (X, \,  Y, \,  Z, \,  W) = - h(R(X, \, Y)Z, \, W) - h(R(X, \, Y)W, \, Z). \label{eq-semi-parallel}
\end{equation}
        
In \cite{Veken3}, the definition of rotation hypersurfaces in $Q^n(\varepsilon) \times \mathbb{R}$ was introduced as the following:

\begin{definition}[\cite{Veken3}]
Consider a three-dimensional subspace $P^3$ of $\E^{n+2}$ resp. $\Lo^{n+2}$, containing the $x_{n+2}-$axis. Then $(Q(\varepsilon)^n \times \R) \cap P^3 = Q^1(\varepsilon) \times \R$. Let $P^2$ be a two-dimensional subspace of $P^3$, also through the $x_{n+2}-$axis. Denote by $I$ the group of isometries of $\E^{n+2}$, resp. $\Lo^{n+2}$, which leave $Q(\varepsilon)^n \times \R$ globally invariant and which leave $P^2$
pointwise fixed. Finally, let $\alpha$ be a curve in $Q(\varepsilon)^1 \times \R$ which does not intersect $P^2$. The rotation hypersurface $M^n$ in $Q(\varepsilon)^n \times \R$ with profile curve
$\alpha$ and axis $P^2$ is defined as the $I-$orbit of $\alpha$. \label{defi-rotational}
\end{definition}

In the same paper, the authors have given a complete description of the shape operator of rotation hypersurfaces. From the Definition \ref{defi-rotational}, they obtained local parametrizations for such hypersurfaces and calculated the principal curvatures, concluding that the shape operator has just two eingenvalues, one of then with multiplicity at least $n-1$ and also that the vector field $T$ is an eigenvector of the shape operator. Besides that, they have shown an important criterium to a hypersurface in $Q^n(\varepsilon) \times \mathbb{R}$ to be a rotation hypersurface:     
    \begin{theorem}[\cite{Veken3}]
Take $n\geq3$ and let $M^{n}$ be a hypersurface in $Q^{n}(\varepsilon)\times\mathbb{R}$  with shape operator 
        $$S=\left(\begin{array}{cccc}
        \lambda & \quad  & \quad  & \quad\\
        \quad   & \mu    & \quad  & \quad\\
        \quad   & \quad  & \ddots & \quad\\
        \quad   & \quad  & \quad  & \mu
        \end{array}\right),$$
with $\lambda\neq\mu$ and suppose that $ST=\lambda T$. Assume moreover that there is a functional relation $\lambda=\lambda(\mu)$. Then $M^{n}$ is a open part of a rotation hypersurface. \label{criterion}
\end{theorem}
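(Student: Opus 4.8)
The plan is to use the eigenstructure of $S$ to foliate $M^{n}$ by round $(n-1)$-spheres, which will be the rotation orbits, and to recover the profile curve as an integral curve of $T$. Since $\lambda\neq\mu$ and $ST=\lambda T$, the field $T$ spans the one-dimensional eigenspace $E_\lambda$ on the open set where $T\neq 0$ (note that $T\equiv 0$ would give $\cos\theta=\pm1$ and, via \eqref{Xcos}, $S=0$, contradicting $\lambda\neq\mu$); by self-adjointness of $S$ the $\mu$-eigenspace is $E_\mu=T^{\perp}$, of dimension $n-1\geq 2$. All the work is local, on $\{T\neq 0\}$.

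The heart of the argument is to read off the geometry of the eigendistributions from the Codazzi equation \eqref{codazzi} and the relations \eqref{Xcos}. Taking $X,Y\in E_\mu$ in \eqref{codazzi}, the right-hand side vanishes, and projection onto $E_\mu$ and $E_\lambda$ yields, respectively, that $X[\mu]=0$ for all $X\in E_\mu$ (using $n-1\geq 2$ to separate two independent directions) and that $[X,Y]\in E_\mu$, so $E_\mu$ is integrable. A direct computation with the first relation in \eqref{Xcos} gives $\langle\nabla_XY,T\rangle=-\mu\cos\theta\,\langle X,Y\rangle$ for $X,Y\in E_\mu$, so the leaves of $E_\mu$ are totally umbilical in $M^{n}$; adjoining the $N$-direction, they are totally umbilical in $Q^n(\varepsilon)\times\R$. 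The second relation in \eqref{Xcos} gives $X[\cos\theta]=-\lambda\langle X,T\rangle=0$ for $X\in E_\mu$, and together with $X[\mu]=0$ and the hypothesis $\lambda=\lambda(\mu)$ this shows that $\mu$, $\lambda$ and $\theta$ are all constant along each leaf. Finally, writing $\tilde\nabla$ for the ambient connection, one has $\tilde\nabla_X N=-\mu X$ and $\tilde\nabla_X T=\mu\cos\theta\,X$ tangent to the leaf for $X\in E_\mu$; hence the mean curvature vector of a leaf is parallel in its normal bundle, so each leaf is a spherical submanifold of $Q^n(\varepsilon)\times\R$, i.e. an extrinsically round $(n-1)$-sphere.

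It remains to reconstruct the rotational embedding from this spherical foliation. I would fix an orthonormal frame $e_1=T/|T|,\,e_2,\dots,e_n$ adapted to $E_\lambda\oplus E_\mu$, view $M^{n}$ as a codimension-two submanifold of $\E^{n+2}$ (resp. $\Lo^{n+2}$), and integrate the resulting structure equations along the flow of $T$. The integral curve of $T$ through a point is the candidate profile curve $\alpha$; I would show it lies in a totally geodesic $Q^1(\varepsilon)\times\R=(Q^n(\varepsilon)\times\R)\cap P^3$ cut out by a three-space $P^3$ through the $x_{n+2}$-axis, with the leaves of $E_\mu$ as the spheres swept out transversally. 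One then exhibits the isometry group $I$ of Definition \ref{defi-rotational} fixing an axis plane $P^2$ and checks that $M^{n}$ is the $I$-orbit of $\alpha$.

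I expect the reconstruction — gluing the one-parameter family of round spheres into a genuine rotation hypersurface — to be the main obstacle, and this is precisely where the functional relation $\lambda=\lambda(\mu)$ is indispensable. The Codazzi analysis alone produces an umbilical, spherical foliation with $\mu,\lambda,\theta$ constant on leaves, but \emph{a priori} the center axes of the leaf-spheres could drift as one moves along $T$. Imposing $\lambda=\lambda(\mu)$ collapses the evolution equations for $\mu$, $\lambda$ and $\theta$ along $T$ (in particular the expression for $T[\mu]$ obtained by feeding $X\in E_\mu$ and $T$ into \eqref{codazzi}) into a single ODE for the profile curve, forcing all the spherical leaves to be coaxial about one fixed $P^2$. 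This is exactly the normal form of Definition \ref{defi-rotational}, whence $M^{n}$ is an open part of a rotation hypersurface.
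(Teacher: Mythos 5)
First, note that the paper does not prove this statement at all: it is quoted verbatim from \cite{Veken3} as a known criterion, so there is no in-paper argument to compare yours against. Judged on its own terms, the first half of your proposal is sound and is the standard opening move: for $X,Y\in E_\mu$ the right-hand side of \eqref{codazzi} vanishes, the $E_\mu$-component gives $X[\mu]=0$ (using $\dim E_\mu=n-1\geq 2$), the $E_\lambda$-component gives integrability of $E_\mu$, and \eqref{Xcos} gives umbilicity of the leaves and the constancy of $\mu$ and $\theta$ (hence of $\lambda$, via the functional relation) along them; your computation that the leaves are extrinsic spheres, via parallelism of the mean curvature vector in the normal bundle, is also correct. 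None of this, however, uses the hypothesis $\lambda=\lambda(\mu)$ in any essential way, and it only establishes that $M^{n}$ is a canal-type hypersurface: an envelope of a one-parameter family of extrinsic $(n-1)$-spheres.

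The genuine gap is the step you yourself flag as ``the main obstacle'': showing that these spheres are coaxial, i.e.\ orbits of a single group $I$ fixing one plane $P^2$ as in Definition \ref{defi-rotational}. You assert that the functional relation ``collapses the evolution equations along $T$ into a single ODE, forcing the leaves to be coaxial,'' but you never derive those evolution equations (the mixed Codazzi equation with $X=T$, $Y\in E_\mu$, which yields $T[\mu]$, $T[\lambda]$ and $T[\cos\theta]$ in terms of the connection coefficients), you do not exhibit the fixed axis or the isometry group, and you do not invoke an alternative route such as a rigidity/uniqueness argument matching your leaf data against the explicit parametrizations and principal curvatures of rotation hypersurfaces computed in \cite{Veken3}. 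Since a one-parameter family of umbilical spheres with $\mu$, $\lambda$, $\theta$ constant on leaves can in principle have drifting centers, the coaxiality is precisely the nontrivial content of the theorem, and it is exactly where $\lambda=\lambda(\mu)$ must do its work; leaving it as a declared expectation means the argument is not a proof.
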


\begin{remark}
As pointed out by the authors in \cite[proof of Theorem 5]{Veken1} and \cite[proof of Theorem 4.2]{Veken2}, in order to guarantee that a hypersurface with shape operator given in Theorem \ref{criterion} is actually a rotation hypersurface, it is enough a functional relation $\lambda = \lambda (\mu, \theta)$ as long as the angle function $\theta$ does not vary in directions orthogonal to $T$. This fact will be used in the proof of our results. \label{rem-criterion}
\end{remark}

Let us note that rotation hypersurfaces constitute a class of hypersurfaces where the vector field $T$ is a principal direction. In \cite{tojeiro}, Tojeiro has shown a classification of the hypersurfaces for which $T$ is an eigenvector of the shape operator: 

Let $M^{n-1}$ be a hypersurface $Q^n(\varepsilon)$  and let $g_s : M^{n-1} \rightarrow Q^n (\varepsilon)$ be its family of parallel hypersurfaces, given by
$$
g_s(x) = C_{\varepsilon} (s) g(x) + S_{\varepsilon} (s) N(x),
$$
where $N$ is a unit normal vector field to $g$ and the functions $C_{\varepsilon}$ and $S_{\varepsilon}$ are given by
$$
C_{\varepsilon}(s) = \left\{ 
\begin{array}{l}
\cos(s), \, if \varepsilon = 1, \\
\cosh(s), \, if \varepsilon = -1,
\end{array}
\right. \,\,\, \textnormal{ and } \,\,\, 
S_{\varepsilon}(s) = \left\{ 
\begin{array}{l}
\sin(s), \, if \varepsilon = 1, \\
\sinh(s), \, if \varepsilon = -1.
\end{array}
\right.
$$
Let $f: M^n := M^n \times I \rightarrow Q^n(\varepsilon) \times \mathbb{R}$ be a hypersurface defined by
\begin{equation}
f(x,s) = g_s(x) + a(s) \partial_{n+2}, \label{function-tojeiro}
\end{equation}
for some smooth function $a : I \rightarrow \mathbb{R}$ with positive derivative on a open interval $I \subset \mathbb{R}$. In this context, the following theorem provides the mentioned classification:

\begin{theorem}[\cite{tojeiro}]
The map $f$ defines, at regular points, a hypersurface that has $T$ as
a principal direction. Conversely, any hypersurface $M^n$ in $Q^n(\varepsilon) \times \mathbb{R}$, $n \geq 2$, with nowhere vanishing angle function that has $T$ as a principal direction is locally given in this way.
\label{thm-tojeiro}
\end{theorem}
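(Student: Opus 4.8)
The plan is to treat the two implications separately, in both cases working in the flat ambient space $\E^{n+2}$ (resp. $\Lo^{n+2}$) and using the elementary identities $S_\varepsilon' = C_\varepsilon$, $C_\varepsilon' = -\varepsilon S_\varepsilon$ and $C_\varepsilon^2 + \varepsilon S_\varepsilon^2 = 1$ satisfied by the functions in \eqref{function-tojeiro}. For the direct statement I would first differentiate $f$. Writing $A$ for the shape operator of $g$ in $Q^n(\varepsilon)$ (so that $dN(v) = -Av$), a direct computation gives, for $v$ tangent to $M^{n-1}$ and for the $s$-direction,
\[
df(v) = (C_\varepsilon(s) I - S_\varepsilon(s) A)v, \qquad \partial_s f = \nu + a'(s)\,\partial_{n+2},
\]
where $\nu := \partial_s g_s = -\varepsilon S_\varepsilon(s) g + C_\varepsilon(s) N$ is a unit vector tangent to $Q^n(\varepsilon)$, orthogonal to every $df(v)$; the regular points are exactly those where $C_\varepsilon I - S_\varepsilon A$ is invertible, where $f$ is then an immersion. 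Since each $df(v)$ is purely horizontal and orthogonal to $\nu$, one checks $\partial_s f \perp df(v)$ for all $v$, so $\partial_s f$ spans the orthogonal complement of the $df(v)$'s. I would then produce the unit normal
\[
\mathcal{N} = \frac{1}{\sqrt{1+a'(s)^2}}\big(-a'(s)\,\nu + \partial_{n+2}\big),
\]
verifying it is orthogonal to $g_s$ (hence tangent to $Q^n(\varepsilon)\times\R$), to every $df(v)$, and to $\partial_s f$. Reading off $\cos\theta = \langle\mathcal N,\partial_{n+2}\rangle = (1+a'^2)^{-1/2}\neq 0$ and using $\partial_{n+2} = T + \cos\theta\,\mathcal N$, a short computation shows $T = \cos^2\theta\,a'\,\partial_s f$, so $T$ is parallel to $\partial_s f$.

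To finish the direct part it remains to show $\partial_s f$ is a principal direction. Because $\mathcal N$ is tangent to $Q^n(\varepsilon)\times\R$, the second fundamental form of $M^n$ is $h(U,V) = \langle\bar\nabla_U V,\mathcal N\rangle$ with $\bar\nabla$ the flat ambient connection. Differentiating $df(v)$ in $s$ gives $\bar\nabla_{\partial_s}df(v) = -\varepsilon S_\varepsilon v - C_\varepsilon Av$, which is again horizontal and orthogonal to $\nu$; pairing with $\mathcal N$ yields $h(\partial_s f, df(v)) = 0$ for all $v$. Thus $S(\partial_s f)$ is orthogonal to every $df(v)$, hence proportional to $\partial_s f$, and since $T \parallel \partial_s f$ the vector field $T$ is a principal direction, proving the first assertion.

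For the converse, assume $ST = \lambda T$ and that the angle function is nowhere vanishing, so that $T \neq 0$; I would also use that $\partial_{n+2}$ is transverse to $M^n$ (i.e. $\cos\theta\neq0$), which is what makes the leaves below immerse with nonzero normal speed. First, $\mathcal D := T^\perp$ is integrable: for $X,Y\in\mathcal D$, using $\nabla_X T = \cos\theta\, SX$ from \eqref{Xcos} and the symmetry of $S$ one gets $\langle[X,Y],T\rangle = \cos\theta(\langle SY,X\rangle-\langle SX,Y\rangle)=0$. Let $E_1 = T/\abs{T}$ and take coordinates $(x,s)$ with $x$ in a leaf $L^{n-1}$ of $\mathcal D$ and $s$ the arclength parameter of the $E_1$-flow, so $M^n \cong L\times I$. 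Since $\mathrm{grad}\,h = T$ for the height function $h = x_{n+2}|_{M}$, the height is constant on each leaf, so $f(L)$ lies in a slice $Q^n(\varepsilon)\times\{c\}$ and the horizontal projection $\Phi = \pi_1\circ f$ immerses each leaf as a hypersurface $\Sigma_s \subset Q^n(\varepsilon)$. Crucially, \eqref{Xcos} gives $X[\cos\theta] = -\langle X, ST\rangle = -\lambda\langle X,T\rangle = 0$ for $X\in\mathcal D$, so $\theta$ is constant along each leaf; since $\abs{\partial_s\Phi}^2 = 1 - \langle E_1, T\rangle^2 = \cos^2\theta$, the normal speed of the family $\Sigma_s$ depends only on $s$. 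This constancy of $\theta$ on the leaves is exactly where the principal-direction hypothesis enters.

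The heart of the argument, and the step I expect to be the main obstacle, is to upgrade this into the explicit parallel-family description. Using $\partial_s f = E_1 \perp df(X)$ for $X\in\mathcal D$, the velocity $\partial_s\Phi$ is $Q^n(\varepsilon)$-normal to $\Sigma_s$, and having shown its length is a function of $s$ alone one reparametrizes $s$ by $Q^n(\varepsilon)$-arclength so that the flow lines $s\mapsto\Phi(x,s)$ become the unit-speed geodesics of $Q^n(\varepsilon)$ normal to a fixed leaf $\Sigma_{s_0}$. By the normal exponential map in space forms these geodesics are $C_\varepsilon(s)g(x) + S_\varepsilon(s)N(x)$ with $g = \Phi(\cdot,s_0)$ and $N$ its unit normal, so the $\Sigma_s$ are genuinely the parallel hypersurfaces $g_s$. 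Finally the height is a function $a(s)$ of $s$ alone, with $a' = dh/ds = \tan\theta > 0$ after the reparametrization, and combining gives $f(x,s) = g_s(x) + a(s)\partial_{n+2}$, which is exactly \eqref{function-tojeiro}. The delicate points are verifying that the $Q^n(\varepsilon)$-normal $N_s$ of $\Sigma_s$ transports correctly along the flow (so that $N$ may be taken independent of $s$) and tracking the constants through the reparametrization; both rest essentially on the principal-direction hypothesis $ST=\lambda T$ through the constancy of $\theta$ on the leaves established above.
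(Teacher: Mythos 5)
The paper offers no proof of this statement to compare against: it is quoted directly from Tojeiro \cite{tojeiro}. Measured against that source, your route is genuinely different. Tojeiro derives the parametrization (\ref{function-tojeiro}) through the equivalence between $T$ being a principal direction and $f$ having flat normal bundle as a codimension-two submanifold of $\E^{n+2}$, resp.\ $\Lo^{n+2}$ (the very equivalence this paper recalls after Theorem \ref{conformally}), whereas you work entirely inside $Q^n(\varepsilon)\times\R$, foliating $M^n$ by the level sets of the height function and invoking the geometry of parallel hypersurfaces in space forms. Your direct half is complete and correct, and it reproduces exactly the unit normal $\eta$ and the principal curvature data that the present paper itself uses in the proof of Theorem \ref{conformally}; the identity $T=\cos^2\theta\,a'\,\partial_s f$ together with $h(\partial_s f,df(v))=0$ does yield $ST=\lambda T$. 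In the converse, your identification of where the hypothesis $ST=\lambda T$ enters — the constancy of $\theta$ along the leaves, via $X[\cos\theta]=-\langle X,ST\rangle=0$ for $X\perp T$ — is exactly right, and it is the elementary counterpart of Tojeiro's flat-normal-bundle condition.

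The one step you assert rather than prove is the one you flag: that after reparametrizing by $Q^n(\varepsilon)$-arclength the curves $s\mapsto\Phi(x,s)$ are geodesics. Orthogonality to every leaf alone does not give this; the leaf-wise constancy of the speed is what closes it, and the argument is short enough to include. Since $\abs{\partial_s\Phi}=\abs{\cos\theta}\neq 0$, the map $\Phi$ is a local diffeomorphism onto an open set of $Q^n(\varepsilon)$, so one may define $u$ there by $u(\Phi(x,s))=s$. Orthogonality makes $\nabla u$ parallel to $\partial_s\Phi$, and $\abs{\nabla u}=\abs{\cos\theta(s)}^{-1}$ is a function of $u$ alone; hence $\nabla_{\nabla u}\nabla u=\tfrac12\nabla\abs{\nabla u}^2$ is parallel to $\nabla u$, the trajectories are pregeodesics, and the space-form formula $C_\varepsilon(s)g+S_\varepsilon(s)N$ applies. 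This also resolves your worry about $N$ transporting correctly: each leaf sits at a constant value of the arclength parameter, so $\Sigma_s$ is literally the parallel hypersurface $g_s$ of $g=\Phi(\cdot,s_0)$, with no separate transport argument needed. Two smaller points to record: your product coordinates $(x,s)$ require the flow of $E_1$ to map leaves to leaves, which holds because $E_1[h]=\abs{T}$ is constant on leaves, so locally $\abs{T}=\phi(h)$ and all flow lines issuing from one leaf solve the same height ODE; and the hypotheses should be read as $\sin\theta\neq 0$ and $\cos\theta\neq 0$ simultaneously (i.e.\ $T\neq 0$ and transversality of $\partial_{x_{n+2}}$), both of which the parametrization (\ref{function-tojeiro}) forces through $\cos\theta=(1+a'^2)^{-1/2}\in(0,1)$. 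With these insertions your proof is complete.
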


Theorem \ref{criterion} is a powerful tool to classify hypersurfaces in $Q^n(\varepsilon) \times \R$. It is used, for example, to classify the totally umbilical hypersurfaces as hypersurfaces locally isometric to rotation hypersurfaces \cite{Veken1, Veken2}. Also, in the same papers, such criterium is used to characterize a class of semi-parallel hypersurfaces $Q^n(\varepsilon) \times \R$. Namely, the authors have shown that every semi-parallel, not totally umbilical hypersurface, with $n \geq 3$ and two distinct principal curvatures, is locally isometric to a rotation hypersurface. This is done by using Theorem \ref{criterion} and the following Lemma:

\begin{lemma}[\cite{Veken2, Veken1}] \label{lema-semi}
Let $M^n$ be a semi-parallel hypersurface of $Q^n(\varepsilon) \times \mathbb{R}$. Let $T$ and $\theta$ be as above. Then there exists a local orthonormal frame field $\left\{ f_1, \ldots, f_n \right\}$ on $M^n$ with
respect to which the shape operator takes one of the following forms:
\begin{enumerate}[i)]
\item $S = \lambda Id$;
\item $S=\left(\begin{array}{cccc}
        \lambda & \quad  & \quad  & \quad\\
        \quad   & \mu    & \quad  & \quad\\
        \quad   & \quad  & \ddots & \quad\\
        \quad   & \quad  & \quad  & \mu
        \end{array}\right),$ \\
        with $ \lambda \mu = - \varepsilon \cos^2 \theta$ and if $n \geq 3$, then $T = ||T|| f_1$;
\item $S=\left(\begin{array}{cccccc}
        0 & \quad  & \quad  & \quad & \quad & \quad \\
        \quad   & \mu_1    & \quad  & \quad & \quad & \quad \\
        \quad   & \quad  & \mu_1 & \quad & \quad & \quad \\
        \quad   & \quad  & \quad  & \ddots & \quad & \quad \\
        \quad   & \quad  & \quad  & \quad & \mu_2 & \quad \\
        \quad   & \quad  & \quad  & \quad & \quad & \mu_2 
        \end{array}\right)$ with $\lambda \mu = - \varepsilon 1$ and $f_1=T= \partial_{n+2}$.   
\end{enumerate}
\end{lemma}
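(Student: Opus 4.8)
The plan is to convert the semi-parallel condition into an algebraic condition on $S$ through the Gauss equation, working pointwise in an orthonormal eigenbasis of the shape operator. First I would rewrite \eqref{eq-semi-parallel}: since $h(X,Y)=\langle SX,Y\rangle$, $S$ is self-adjoint, and each operator $R(X,Y)$ is skew-adjoint, a one-line computation gives $(R\cdot h)(X,Y,Z,W)=\langle [R(X,Y),S]Z,W\rangle$, so that $M^n$ is semi-parallel if and only if $[R(X,Y),S]=0$ for all $X,Y$. Fixing a point $p$ and an orthonormal basis $\{e_1,\dots,e_n\}$ with $Se_i=\lambda_ie_i$, this reads $(\lambda_k-\lambda_m)\langle R(e_i,e_j)e_k,e_m\rangle=0$, so $\langle R(e_i,e_j)e_k,e_m\rangle=0$ whenever $\lambda_k\neq\lambda_m$; equivalently, each $R(e_i,e_j)$ preserves every eigenspace of $S$.

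I would then substitute the Gauss equation \eqref{gauss}. Writing $T_a=\langle e_a,T\rangle$, the components with three distinct indices take the form $\pm\varepsilon\,T_aT_b$ (for instance $\langle R(e_i,e_j)e_j,e_l\rangle=-\varepsilon\,T_iT_l$), while the sectional components are $\langle R(e_k,e_l)e_l,e_k\rangle=\varepsilon(1-T_k^2-T_l^2)+\lambda_k\lambda_l$. Assume $M^n$ is not totally umbilical and $n\geq 3$. Vanishing of the three-index components gives $T_iT_l=0$ as soon as some index $j\neq i,l$ lies in an eigenspace different from that of $e_l$. If two components $T_a,T_b$ were both nonzero, applying this with $(i,l)=(a,b)$ and with $(i,l)=(b,a)$ would force every remaining index to lie simultaneously in the eigenspaces of $e_a$ and of $e_b$, collapsing $S$ to $\lambda\,\mathrm{Id}$, contrary to assumption. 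Hence $T$ has at most one nonzero component: $T$ is a principal direction, and we set $f_1=T/\|T\|$ where $T\neq 0$, so that $T=\|T\|f_1$. Once $T$ is principal the three-index components vanish identically, and only the sectional relations survive.

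Next I would use the sectional relations to classify $S$. For $e_k,e_l$ in distinct eigenspaces, $\lambda_k\lambda_l=-\varepsilon(1-T_k^2-T_l^2)$. Let $\lambda$ be the eigenvalue along $f_1$ and recall $1-\|T\|^2=\cos^2\theta$. Taking $e_k=f_1$ gives $\lambda\lambda_l=-\varepsilon\cos^2\theta$ for every $\lambda_l\neq\lambda$, and taking $e_k,e_l$ both orthogonal to $T$ gives $\lambda_k\lambda_l=-\varepsilon$. If $\lambda\neq 0$, the first relation forces every eigenvalue distinct from $\lambda$ to equal the single value $\mu=-\varepsilon\cos^2\theta/\lambda$; combined with $\lambda_k\lambda_l=-\varepsilon$ among the $\mu$-directions one checks, when $T\neq 0$, that $\lambda$ is simple, yielding either $S=\lambda\,\mathrm{Id}$ (case (i)) or the two-eigenvalue form with $\lambda\mu=-\varepsilon\cos^2\theta$ and $T=\|T\|f_1$ (case (ii)). If $\lambda=0$, the first relation forces $\cos\theta=0$, so $T=\partial_{x_{n+2}}$; should the nonzero eigenvalues take a single value we are again in case (ii), while if two distinct nonzero values $\mu_1,\mu_2$ occur then $\mu_1\mu_2=-\varepsilon$, and a kernel direction orthogonal to $T$ would make the sectional relation against a $\mu_1$-eigenvector read $\varepsilon=0$, so the kernel is spanned by $f_1=T$ alone—this is case (iii). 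Three or more distinct nonzero eigenvalues cannot occur, since any two of them would have product $-\varepsilon$ and hence coincide pairwise.

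The remaining, more delicate point is to upgrade these pointwise normal forms to a smooth local frame field $\{f_1,\dots,f_n\}$ and to read off the multiplicities recorded in the statement. On the open set where the number of distinct eigenvalues is locally constant, the eigenspaces are smooth distributions and the $f_i$ can be chosen smoothly; the Codazzi equation \eqref{codazzi}—which reduces to the assertion that $S$ is a Codazzi tensor precisely when $\cos\theta=0$—governs this regularity and excludes spurious jumps of multiplicity. I expect this globalization, together with the bookkeeping fixing the block sizes in case (iii), to be the main obstacle; the algebraic heart is the reduction of semi-parallelism to the two scalar families $\lambda\lambda_l=-\varepsilon\cos^2\theta$ and $\lambda_k\lambda_l=-\varepsilon$.
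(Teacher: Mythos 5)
Your argument is correct and follows essentially the same route as the proofs of this lemma in the cited sources \cite{Veken1,Veken2} (note that the present paper only quotes the result without proof): one reduces $R\cdot h=0$ to $[R(X,Y),S]=0$, evaluates the Gauss equation (\ref{gauss}) in a pointwise eigenbasis to get the relations $(\lambda_k-\lambda_l)T_kT_l=0$ and $(\lambda_k-\lambda_l)\bigl(\varepsilon(1-T_k^2-T_l^2)+\lambda_k\lambda_l\bigr)=0$, concludes that $T$ is principal in the non-umbilical case, and runs exactly your case split on whether the principal curvature in the $T$-direction vanishes. The two points you leave as sketches are genuinely minor: simplicity of $\lambda$ in case ii) follows in one line (a $\lambda$-eigenvector orthogonal to $T$ paired with a $\mu$-eigenvector would give $\lambda\mu=-\varepsilon$ and $\lambda\mu=-\varepsilon\cos^2\theta$ simultaneously, forcing $T=0$), and the smooth choice of the frame is the standard argument on the open set where the algebraic type is locally constant, as in \cite{Veken1,Veken2} --- though your aside that $S$ is a Codazzi tensor ``precisely when $\cos\theta=0$'' should read $\cos\theta=0$ or $T=0$, since the right-hand side of (\ref{codazzi}) also vanishes at points where $T$ does.
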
 

Consequently, the semi-parallel hypersufaces in $Q^n(\varepsilon) \times $, $n>3$ are given by one of the following classes: the umbilical hypersurfaces, an open part of the rotation hypersurface with profile curve determined by the equation $\lambda \mu + \varepsilon \cos^2 \theta = 0$ or an open part of the hypersurface $\overline{M}^{n-1} \times \R$, where $\overline{M}^{n-1}$ is a semi-parallel hypersurface in $Q^n(\varepsilon)$ (see \cite[Theorem 5]{Veken1} and \cite[Theorem 4.2]{Veken2}).

\section{Proof of the main results}

\subsection{conformally flat hypersurfaces}

It is a well known fact that $\mathbb{S}^n \times \mathbb{R}$ and $\mathbb{H}^n \times \mathbb{R}$ are conformally flat Riemannian manifolds (see \cite{lafontaine} for a proof). Therefore, the following theorem, due to Nishikawa and Maeda \cite{Nishikawa}, will be useful to understand the geometry behind the conformally flat hypersurfaces in such espaces:

\begin{theorem}[\cite{Nishikawa}] \label{nishikawa-maeda} Let $M^n$ be a hypersurface of a conformally flat Riemannian
manifold $\tilde{M}^{n+1}$ $n>3$. Then $M^n$ is conformally flat if and only if at each point of $M^n$, the shape operator $S$ of $M^n$ is one of the following types:
\begin{enumerate}[i)]
\item $S=\lambda I$, where $I$ the identity transformation,
\item $S$ has two distinct eigenvalues of multiplicity $n-1$ and $1$.
respectively. 
\end{enumerate}
\end{theorem}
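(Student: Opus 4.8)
The plan is to translate conformal flatness into the vanishing of the Weyl tensor and then to reduce everything to a pointwise problem in linear algebra. Since $n>3$, a metric on $M^n$ is conformally flat if and only if its Weyl tensor $W$ vanishes identically, and the same criterion governs the ambient $\tilde M^{n+1}$ because $n+1\geq 5$. I would begin from the Gauss equation, which writes the curvature tensor of the induced metric as
\[
\langle R(X,Y)Z,W\rangle=\langle \tilde R(X,Y)Z,W\rangle+B(X,Y,Z,W),
\]
where $B(X,Y,Z,W):=\langle SX,W\rangle\langle SY,Z\rangle-\langle SX,Z\rangle\langle SY,W\rangle$ and $\tilde R$ is evaluated on vectors tangent to $M^n$. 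Because $\tilde M^{n+1}$ is conformally flat, its curvature tensor is a Kulkarni--Nomizu product of its Schouten tensor with the metric; restricting all four arguments to $TM^n$ preserves this shape, so $\tilde R|_{M^n}$ is again a Kulkarni--Nomizu product of a symmetric $2$-tensor with the induced metric.

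The decisive point is that the projection onto the Weyl summand annihilates every Kulkarni--Nomizu product of a symmetric $2$-tensor with the metric, since these products span exactly the complementary (scalar plus trace-free Ricci) part of the space of algebraic curvature tensors. Applying this linear projection to the Gauss equation makes the term $\tilde R|_{M^n}$ drop out, so the Weyl tensor of $M^n$ equals the totally trace-free part of $B$. Consequently $M^n$ is conformally flat precisely when the trace-free part of $B$ vanishes at every point, and the theorem reduces to the purely algebraic question: for which symmetric operators $S$ does the trace-free part of $B$ vanish?

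To answer this I would diagonalize $S$ at the point, with principal curvatures $\lambda_1,\ldots,\lambda_n$ in an orthonormal eigenbasis $\{e_i\}$. In this basis the only independent nonzero components of $B$ are $B(e_i,e_j,e_j,e_i)=\lambda_i\lambda_j$ for $i\neq j$, so its Ricci and scalar contractions are diagonal, with $\mathrm{Ric}_B(e_i,e_i)=\lambda_i(H-\lambda_i)$ and total trace $H^2-\sum_k\lambda_k^2$, where $H=\sum_k\lambda_k$. Subtracting the trace part prescribed by the standard decomposition of algebraic curvature tensors, the vanishing of the Weyl components collapses to a single family of scalar equations,
\[
(n-4)\,\lambda_i\lambda_j=(\lambda_i+\lambda_j)(H-\lambda_i-\lambda_j)-c,\qquad i\neq j,
\]
with $c=\bigl(H^2-\sum_k\lambda_k^2\bigr)/(n-1)$.

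The final step is to solve this system. Differencing the equations for pairs $(i,j)$ and $(i,k)$ with $\lambda_j\neq\lambda_k$ yields $(n-4)\lambda_i=H-2\lambda_i-\lambda_j-\lambda_k$; comparing two such relations forces $(n-3)(\lambda_i-\lambda_j)=0$, so for $n>3$ no three principal curvatures can be mutually distinct, and $S$ has at most two eigenvalues. If there are exactly two, say $a$ of multiplicity $p$ and $b\neq a$ of multiplicity $n-p$, then differencing a within-eigenspace equation against the across-eigenspace equation gives $(n-p-1)(a-b)=0$ when $p\geq 2$ and $(p-1)(a-b)=0$ when $n-p\geq 2$; since $a\neq b$, this is consistent only for $p=1$ or $p=n-1$, i.e. the multiplicities are $1$ and $n-1$. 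A direct substitution then confirms that both admissible types, $S=\lambda I$ and the two-eigenvalue type with multiplicities $1$ and $n-1$, satisfy every equation, which gives the converse implication. I expect the main obstacle to be exactly this case analysis: one must keep careful track of which pair-equations are actually present for a given eigenvalue configuration, since a within-eigenspace equation only occurs when that eigenspace has dimension at least two, and ignoring this makes the genuinely admissible two-eigenvalue case appear contradictory.
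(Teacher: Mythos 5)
Your argument is sound, but note that the paper does not prove this statement at all: it is quoted verbatim from Nishikawa--Maeda \cite{Nishikawa} and used as a black box, so there is no in-paper proof to compare against. What you have written is a correct, self-contained proof along essentially the classical lines: since $n>3$ (and $n+1\geq 4$), conformal flatness on both $M^n$ and $\tilde M^{n+1}$ is equivalent to the vanishing of the Weyl tensor; the ambient curvature restricted to $TM$ is a Kulkarni--Nomizu product of a symmetric $2$-tensor with $g$ and hence has no Weyl part, so the Gauss equation reduces the problem to the Weyl part of $B=\tfrac12\,h\owedge h$, a pointwise algebraic condition on the principal curvatures. I checked your key computations: the component equation is equivalent to $(n-2)\lambda_i\lambda_j+\lambda_i^2+\lambda_j^2-(\lambda_i+\lambda_j)H+c=0$, your differencing does give $(n-2)\lambda_i=H-\lambda_j-\lambda_k$ and then $(n-3)(\lambda_i-\lambda_j)=0$, ruling out three mutually distinct eigenvalues for $n>3$; and the within/across comparison gives $(n-p-1)(a-b)=0$ for $p\geq 2$, forcing multiplicities $\{1,n-1\}$; finally, with $c=2ab+(n-2)b^2$ one verifies by substitution that the quasi-umbilical type does satisfy all equations. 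Two small points you should make explicit rather than leave implicit: (i) in the eigenbasis all components of $B$ and of the Kulkarni--Nomizu correction terms with three or four distinct indices vanish automatically, which is why the Weyl condition really does collapse to the family $W_{ijji}=0$; and (ii) your caveat about which pair-equations exist is exactly right and is what saves the admissible case --- the argument as you organized it handles this correctly.
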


We are now in position to proof our first main result:

%\begin{theorem} \label{conformally-2}
%Let $M^n$, $n>3$, be a hypersurface in $Q^n(\epsilon) \times \mathbb{R}$. If $M^n$ is a rotation hypersuface, then $M^n$ is conformally flat. Conversely, if $M^n$ is a conformally flat hypersurface, then $M^n$ is a rotation hypersurface when one of following cases occurs:
%\begin{enumerate}[i)]
%\item $M^n$ is a totally umbilic hypersurface, which is not totally geodesic;
%\item the shape operator of $M^n$ has two distinct eingevalues $\lambda$ and $\mu$, of multiplicity $1$ and $n-1$, respectively, where $\lambda=\lambda(\mu)$ and the vector field $T$ is an eigenvector associated to $\lambda$.
%\end{enumerate}
%\end{theorem}

\begin{proof}[Proof of Theorem \ref{conformally}]
The first part is a direct application of Theorem \ref{nishikawa-maeda}. Since a rotation hypersurface in $Q(\varepsilon) \times \mathbb{R}$ agrees with \emph{i)} or \emph{ii)}, it is a conformally flat hypersuface.

Conversely, let $M^n$ be a conformally flat hypersurface in $Q^n(\varepsilon) \times \mathbb{R}$. By Theorem \ref{nishikawa-maeda}, the shape operator $S$ associated to $M^n$ either has the form $S = \lambda I$, where $I$ is the identity transformation or it has two distinct eingenvalues $\mu$ and $\lambda$ of multiplicity, $n-1$ and $1$. Let us consider each case separately:  

\emph{i)} If $S = \lambda I$, $M^n$ is a totally umbilical hypersurface. Since it is not totally geodesic, it follows by the classification of totally umbilical hypersufaces in $\mathbb{S}^n \times \mathbb{R}$ (see \cite{Veken1}, Theorem 4) and $\mathbb{H}^n \times \mathbb{R}$ (see \cite{Veken2}, Theorem 3.3)  that $M^n$ is a rotation hypersurface.

\emph{ii)} Since $T$ is an eigenvector of the shape operator, we follow \cite{tojeiro} to write the hypersurface $M^n$ locally as given in (\ref{function-tojeiro}). The unit normal of $f$ is given by 
$$
\eta (x,s) = -\dfrac{a'(s)}{\sqrt{1+a'(s)^2}} N_s(x) + \dfrac{1}{\sqrt{1+a'(s)^2}} \partial_{n+2},
$$
where $N_s(x) = - \varepsilon S_{\varepsilon}(s)g(x)+C_{\varepsilon}(s) N(x)$ is the unit normal of $g_s$. Therefore, the principal curvatures of $M^n$ are given by
\begin{equation}
\begin{array}{rcl}
k_i^f &=& - \dfrac{a'(s)}{\sqrt{1+a'(s)^2}} k_i^s, \, 1 \leq i \leq n-1, \\
k_n &=& -\dfrac{a''(s)}{(\sqrt{1+a'(s)^2})^3},
\end{array}
\end{equation}
where $k_i^s$ are the principal curvatures of $g_s$ and $ST = k_n T$.
If the shape operator associated to $M^n$ has two distinct eingenvalues, namely $\lambda$ and $\mu$, with multiplicity $1$ and $n-1$, the only possibility we have is 
$$
\mu = - \dfrac{a'(s)}{\sqrt{1+a'(s)^2}} k^s, \,\,\, \lambda = -\dfrac{a''(s)}{(\sqrt{1+a'(s)^2})^3},
$$
with $k^s = k_i^s$, for all $i$. Consequently, the shape operator of $M^n$ has the form as given in Theorem \ref{criterion}. Since $\lambda=\lambda(\mu, \theta)$, the proof is completed by showing that the angle function $\theta$ does not vary in directions orthogonal to $T$, as stated in Remark \ref{rem-criterion}. This is done by considering equation (\ref{Xcos}). Since $ST = \lambda T$, one has $X[\cos \theta ] = - \langle X, ST \rangle = 0$, which concludes the proof.

\end{proof}

\begin{remark}
It is important to note that the totally geodesic hypersurfaces in $Q^n(\varepsilon) \times \mathbb{R}$ are completely classified. They are given as an open part of $N^{n-1}(\varepsilon) \times \mathbb{R}$. with $N^{n-1}(\varepsilon)$ a totally geodesic hypersurface of $Q^n(\varepsilon)$, or an open part of $Q^n(\varepsilon) \times \left\{ t_0 \right\}$, for $t_0 \in \mathbb{R}$ (see these results in \cite{Veken1} and \cite{Veken2}). 
In this case, the totally geodesic hypersurface will be a rotation hypersurface only when $M^n = Q^{n-1}(\varepsilon) \times \mathbb{R}$.
\end{remark}

\subsection{Radially flat hypersurfaces}

\begin{proof}[proof of Theorem \ref{radially}] The proof consists of applying the Gauss equation (\ref{gauss}) to calculate $\langle R(e_i, T) e_i, T \rangle$, when $e_i \neq T$, $ 2 \leq i \leq n$, and $T$ are principal directions of $M^n$, such that $ST = \lambda T$ and $Se_i = \mu_i e_i$. In this case, one has:
\begin{equation}
\langle R(e_i, T) T, e_i \rangle =  ||T||^2 (\mu_i \lambda + \varepsilon \cos^2 \theta), \label{semi-gauss}
\end{equation}

If $M^n$ is a radially flat hypersurface  and  $ST=\lambda T$, with $\lambda \neq 0$, the equation (\ref{semi-gauss}) implies that $ \lambda \mu_i = -  \varepsilon \cos^2 \theta$.
Let $e_1 = \dfrac{T}{||T||}$ and write $Se_1 = \mu_1 e_1$. Then, $\mu_1 = \lambda$ and $\mu_i = \mu -  \dfrac{\varepsilon \cos^2 \theta}{\lambda}$, $ 2 \leq i \leq n$. It follows by theorem \ref{criterion} and Remark \ref{rem-criterion} that $M^n$ is a rotation hypersurface. For all indices $i, \, j, \, k, \, l$, the equation (\ref{eq-semi-parallel}), evaluated in the principal directions is given by
\begin{equation}
\begin{array}{rcl}
(R \cdot h) (e_i, e_j, e_k, e_l) &=& - h(R(e_i,e_j)e_k,e_l) - h(R(e_i,e_j)e_l,e_k)  \\
&=& - (\mu_l - \mu_k) \left[ (\varepsilon + \mu_i \mu_j) R^0_{ijkl} + \right. \\
&& + \left. \varepsilon ||T||^2 (\delta_{k1} R^0_{ijl1} + \delta_{l1}R^0_{ij1k}) \right],
\end{array} \label{test-semi-parallel}
\end{equation}
where $R^0_{abcd} := \delta_{ad} \delta_{bc} - \delta_{ac} \delta_{bd}$. We claim that the left-hand side of equation (\ref{test-semi-parallel}) vanishes for all indices. If $k \neq 1$ and $l \neq 1$, then $\mu_k = \mu_l$ and consequently $R \cdot h = 0$. When $k=1$ and $l \neq 1$, the equation (\ref{test-semi-parallel}) reduces to
$$
(R \cdot h) (e_i, e_j, e_1, e_l)=(\varepsilon \cos^2 \theta + \mu_i \mu_j ) R^0_{ijl1}.
$$
If $i \neq 1$ and $j \neq 1$ or $i=j=1$, we have $R^0_{ijl1}=0$ and the statement is proved. When $i=1$, one has
$$
(R \cdot h) (e_1, e_j, e_1, e_l)=(\varepsilon \cos^2 \theta + \lambda \mu_j ) R^0_{1jl1}.
$$
Since $\lambda \mu_j = -  \varepsilon \cos^2 \theta$, the affirmation is also true in this case. The remaining cases are treated in a completely analogous way. Therefore $(R \cdot h) (e_i, e_j, e_k, e_l) = 0$ for all indices $i, \, j, \, k, \, l$ and we conclude, by linearity, that $R \cdot h \equiv 0$, consequently $M^n$ is semi-parallel.

Conversely, if $M^n$ is a semi-parallel, not totally umbilical hypersurface, it follows from Lemma \ref{lema-semi} that the shape operator of $M^n$ takes the form \emph{ii)} or \emph{iii)}. In any case, one has $ST = \lambda T$. By applying the equation (\ref{semi-gauss}) again, one has $\langle R(e_i, T) T, e_i \rangle=0$. In fact, let us consider each case separately:
\begin{enumerate}[a)]
\item If $S$ takes the form as given in \emph{ii)}, then $Se_i = \mu ei$ and $\mu \lambda = - \varepsilon \cos^2 \theta$. Therefore $\langle R(e_i, T) e_i, T \rangle = ||T||^2 (\mu \lambda + \varepsilon \cos^2 \theta)=0$. 
\item If $S$ takes the form as given in \emph{iii)}, then $\lambda=0$ and $T=\partial_{n+2}$, which is equivalent to $\cos \theta =0$. Consequently $\langle R(e_i, T) T, e_i \rangle=0$.
\end{enumerate}
Since $\langle R(e_i, T) T, e_j \rangle=0$, for $i \neq j$, we conclude, by linearity, that $M^n$ is radially flat.
\end{proof}

\begin{remark}
When $M^n$ is radially flat and $T$ is a principal direction, with principal curvature $\lambda = 0$, it follows by Gauss equation (\ref{gauss}) that $\cos \theta = 0$ and therefore $M^n = \overline{M}^{n-1} \times \mathbb{R}$, where $\overline{M}^{n-1}$ is a hypersurface of $Q^n(\varepsilon)$. It is no longer true, in general, that $M^n$ in this case is semi-parallel. In fact, when $M^n$ takes this form, it will be semi-parallel if, and only if, $\overline{M}^{n-1} \subset Q^n(\varepsilon)$ is semi-parallel (see \cite[Theorem 5]{Veken1} and \cite[Theorem 4.2]{Veken2}).

On the other hand, when $M^n$ is a semi-parallel, totally umbilical hypersurface in $Q^n(\varepsilon) \times \mathbb{R}$, it does not follow directly that $M^n$ is radially flat. In fact, $M^n$ will be radially flat when:
\begin{enumerate}[a)]
\item  $M^n$ is an open part of the the totally geodesic $\mathbb{S}^{n-1} \times \mathbb{R}$. In fact, we must have the shape operator $S \equiv 0$ and $\cos \theta \equiv 0$.
\item $M^n$ is a hypersurface in $\mathbb{H}^n \times \mathbb{R}$ with $\lambda^2 = \cos^2 \theta$. Particularly, if $\lambda \equiv 0$, then $M^n$ is is an open part of a totally geodesic $\mathbb{M}^{n-1} \times \mathbb{R}$, where $\mathbb{M}^{n-1} \subset \mathbb{H}^n$ is a totally geodesic hypersurface. 
\end{enumerate}

\end{remark}

\section{Applications}

%In what follows, the Ricci tensor, given by  $\textnormal{Ric}(X,Y)=\textnormal{trace}\left\{ Z \mapsto R(Z,X)Y \right\}$, and the scalar curvature, given by will be considered.
In this section, we will present some applications as corollaries of Theorem 1 (corollaries 11, 12 and 13) and Theorem 2 (corollary 14).

Firstly, we use Theorem \ref{conformally} to classify the rotation hypersurfaces in $Q^n(\varepsilon) \times \mathbb{R}$ that are Einstein manifolds:

\begin{cor}
Let $M^n$, $n>3$, be a rotation hypersurface in $Q^n(\varepsilon) \times \mathbb{R}$. If $M^n$ is an Einstein manifold, then $M^n$ has constant sectional curvature. \label{cor-einstein}
\end{cor}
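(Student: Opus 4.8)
The plan is to combine the explicit form of the shape operator of a rotation hypersurface with the Gauss equation (\ref{gauss}) to write down all the sectional curvatures, and then to show that the Einstein condition forces the two \emph{a priori} distinct values to coincide. Since $M^n$ is a rotation hypersurface, $T$ is a principal direction and the shape operator has eigenvalue $\lambda$ along $T$ and a single eigenvalue $\mu$ of multiplicity $n-1$ on $T^{\perp}$ (with $\lambda=\mu$ in the totally umbilical case). Away from the locus $T=0$ --- where $\cos^2\theta=1$ and $M^n$ is an open part of a slice $Q^n(\varepsilon)\times\{t_0\}$ of constant curvature $\varepsilon$ --- I would fix a local orthonormal frame $\{e_1,\dots,e_n\}$ with $e_1=T/||T||$, $Se_1=\lambda e_1$ and $Se_i=\mu e_i$ for $2\le i\le n$.

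Evaluating (\ref{gauss}) in this frame (compare (\ref{semi-gauss})) produces exactly two families of sectional curvatures,
\[
K(e_1,e_i)=\varepsilon\cos^2\theta+\lambda\mu=:K_1,\qquad K(e_i,e_j)=\varepsilon+\mu^2=:K_2,\quad 2\le i<j\le n,
\]
and a short inspection shows that $\langle R(e_a,e_b)e_c,e_d\rangle$ vanishes unless $\{a,b\}=\{c,d\}$. Consequently the off-diagonal components of the Ricci tensor vanish, while the diagonal ones are
\[
\mathrm{Ric}(e_1,e_1)=(n-1)K_1,\qquad \mathrm{Ric}(e_i,e_i)=K_1+(n-2)K_2.
\]
Imposing that $M^n$ be Einstein, the equality $\mathrm{Ric}(e_1,e_1)=\mathrm{Ric}(e_i,e_i)$ reads $(n-2)(K_1-K_2)=0$, so that $n>3$ yields $K_1=K_2=:c$.

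It remains to upgrade the coincidence $K_1=K_2$ into genuine constancy of the sectional curvature, and this is the step I expect to be the main obstacle: equality of a handful of privileged sectional curvatures need not, for an arbitrary metric, imply isotropy. Here it does, because the curvature tensor is supported on the indices with $\{a,b\}=\{c,d\}$, so once all of its sectional curvatures equal $c$ it takes the constant-curvature form and $M^n$ is isotropic at each point; Schur's lemma ($n\ge3$) then makes $c$ constant. The conceptual reason behind this rigidity is the first part of Theorem \ref{conformally}: $M^n$ is conformally flat, hence its Weyl tensor vanishes, and for a conformally flat Einstein manifold of dimension $\ge4$ the curvature is determined entirely by the Ricci tensor; writing $\mathrm{Ric}=\rho\,g$ with $\rho$ constant by Schur, this gives constant sectional curvature $c=\rho/(n-1)$. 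I would use this observation both to close the argument and as a check on the explicit computation above.
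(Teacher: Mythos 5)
Your proposal is correct, but it reaches the conclusion by a genuinely different, more computational route than the paper. The paper's proof is two lines: by the first part of Theorem \ref{conformally} a rotation hypersurface is conformally flat, and it then invokes K\"uhnel's theorem \cite{kuhnel} that a metric conformal to one of constant sectional curvature is Einstein if and only if it itself has constant sectional curvature. You instead work directly with the Gauss equation (\ref{gauss}) and the known eigenstructure of the shape operator of a rotation hypersurface, obtaining the two sectional curvatures $K_1=\varepsilon\cos^2\theta+\lambda\mu$ and $K_2=\varepsilon+\mu^2$, checking that $R$ is ``diagonal'' in the adapted frame so that $K_1=K_2$ forces pointwise isotropy, and finishing with Schur; your computation is accurate (note only that $n\geq 3$ already suffices for $(n-2)(K_1-K_2)=0$ to give $K_1=K_2$, and that the set $\{T=0\}$ need not be open, so the extension of the conclusion across it is by continuity rather than by identifying that locus with an open piece of a slice). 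The trade-off: the paper's argument is shorter and needs nothing beyond conformal flatness, while yours is self-contained, avoids the external conformal-geometry result, and produces the explicit relation $\varepsilon\cos^2\theta+\lambda\mu=\varepsilon+\mu^2$, which ties in directly with the classification of constant-curvature rotation hypersurfaces in \cite{manfio} mentioned right after the corollary. The closing ``conceptual reason'' you give (vanishing Weyl tensor plus Einstein in dimension $\geq 4$ implies constant curvature) is essentially the paper's own proof, so you have in effect supplied both arguments.
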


\begin{proof}
The proof follows directly by the fact that $M^n$ is conformally flat, given in the first part of Theorem 1. It is a well known result (see \cite{kuhnel} for details) that, if $(M^n,g)$ is a manifold with constant sectional curvature, then $(M^n, \varphi g)$, where $\varphi >0$ is a smooth function defined on $M^n$, is an Einstein manifold if and only if $(M^n, \varphi g)$ has constant sectional curvature.
\end{proof}

In \cite{manfio}, rotation hypersurfaces in $Q^n(\varepsilon) \times \mathbb{R}$, $n \geq 3,$ with constant sectional curvature were completely classified in terms of the respective profile curve. Therefore, corollary above and this classification give a complete classification of the rotation hypersurfaces in $Q^n(\varepsilon) \times \mathbb{R}$ where the induced metric is Einstein.

A natural generalization of Einstein manifolds are the Ricci solitons. In recent years, this topic has attracted a lot of attention, mainly because their are special solutions to the Ricci flow, which was use to prove the Poincar\'e conjecture (see \cite{cao} and the references therein). A Riemannian manifold $(M,g)$ endowed with a smooth vector field $V$ is a Ricci soliton if
\begin{equation}
\textnormal{Ric} + \dfrac{1}{2} \mathcal{L}_V g  = c g, \label{ricci-soliton}
\end{equation}
where $c$ is a real constant and $\mathcal{L}_V g$ is the Lie derivative of $g$ with respect to $V$. The vector field $V$ is called potential vector field. The Ricci soliton is called shrinking when $c>0$, steady when $c=0$, and expanding when $c<0$.

If $V$ is the gradient of a smooth function $f$, then the Equation (\ref{ricci-soliton}) takes the form
\begin{equation}
 \textnormal{Ric} + \textnormal{Hess}_f = c g, \label{ricci-soliton-gradient}
\end{equation}
where $\textnormal{Hess}_f$ is the Hessian of $f$. In this case, the $(M,g)$ is called gradient Ricci soliton and the function $f$ is called potential function.

Ricci solitons are also considered as Riemannian submanifolds (see \cite{chen} for a survey on the topic). Although the extrinsic geometry Ricci solitons given by conformally flat hypersurfaces in $\R^n$ and $S^n$ is well described (\cite{cho}), their behavior on conformally flat hypersurfaces in $Q^n(\varepsilon) \times \mathbb{R}$ is still unknown.  In this context, Theorem \ref{conformally} is used to provide a class of  conformally flat hypersurfaces $Q^n(\varepsilon) \times \mathbb{R}$, with the structure of Ricci solitons, whose potential vector field is $T$. This class is given by the following corollary:

\begin{cor}
Let $M^n$, $n>3$, be a conformally flat hypersurface in $Q^n(\varepsilon) \times \mathbb{R}$, not totally umbilical. Suppose that $M^n$ is a Ricci soliton, whose potential vector field is $T$. If $T$ is an eigenvector of the shape operator, associated to an eingenvalue of multiplicity $1$, then $M^n$ is rotational. \label{cor-soliton}
\end{cor}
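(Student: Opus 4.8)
The plan is to show that the Ricci soliton condition, combined with the conformally flat hypothesis and the assumption that $T$ is a principal direction with multiplicity $1$, forces a functional relation $\lambda = \lambda(\mu,\theta)$ between the two principal curvatures, which by Theorem \ref{conformally} (item ii) yields the rotational conclusion. First I would invoke Theorem \ref{nishikawa-maeda}: since $M^n$ is conformally flat and not totally umbilical, its shape operator has exactly two distinct eigenvalues $\lambda$ and $\mu$, of multiplicities $1$ and $n-1$. By hypothesis $T$ is the eigenvector associated to the simple eigenvalue, so $ST = \lambda T$ and $Se_i = \mu e_i$ for a local orthonormal frame $e_2,\dots,e_n$ orthogonal to $e_1 = T/\|T\|$.

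Next I would compute the Ricci tensor of $M^n$ in this frame using the Gauss equation (\ref{gauss}). Tracing (\ref{gauss}) gives $\mathrm{Ric}(e_i,e_j)$ in terms of $\varepsilon$, the principal curvatures $\lambda,\mu$, the mean curvature, and the components $\langle e_i, T\rangle$; because the frame diagonalizes both $S$ and the contribution from $T$, the Ricci tensor will also be diagonal, with one eigenvalue $\rho_1$ in the $T$-direction and a repeated eigenvalue $\rho_2$ in the orthogonal directions. Then I would impose the soliton equation (\ref{ricci-soliton}) with $V = T$. The key tool here is the first identity in (\ref{Xcos}), $\nabla_X T = \cos\theta\, SX$, which lets me evaluate the Lie derivative $\mathcal{L}_T g$ explicitly: $(\mathcal{L}_T g)(X,Y) = \langle \nabla_X T, Y\rangle + \langle \nabla_Y T, X\rangle = 2\cos\theta\,\langle SX, Y\rangle$. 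Thus $\tfrac12 \mathcal{L}_T g = \cos\theta\, h$, which is again diagonal in the chosen frame with entries $\lambda\cos\theta$ in the $T$-direction and $\mu\cos\theta$ in the orthogonal directions.

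Feeding these into (\ref{ricci-soliton}) produces two scalar equations, one from the $T$-direction and one (repeated) from the orthogonal directions:
\begin{equation}
\rho_1 + \lambda\cos\theta = c, \qquad \rho_2 + \mu\cos\theta = c,
\end{equation}
where $\rho_1,\rho_2$ are explicit polynomial expressions in $\lambda,\mu,\theta$ (and $\varepsilon$) coming from the Gauss equation. Subtracting these eliminates the constant $c$ and yields a single algebraic relation among $\lambda$, $\mu$, and $\theta$. The goal is to solve this relation for $\lambda$ as a function $\lambda = \lambda(\mu,\theta)$, which is exactly the hypothesis needed in item ii) of Theorem \ref{conformally}. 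Together with the fact that $T$ is a principal direction, Theorem \ref{conformally} then gives that $M^n$ is locally congruent to a rotation hypersurface.

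The main obstacle I anticipate is establishing that the relation obtained after eliminating $c$ can genuinely be solved for $\lambda$ in terms of $\mu$ and $\theta$ — i.e., that it is not degenerate and does not merely cut out isolated points. One must check, via the implicit function theorem or by direct inspection of the explicit Gauss-equation expressions for $\rho_1$ and $\rho_2$, that the coefficient of $\lambda$ does not vanish identically, so that $\lambda$ is locally a well-defined smooth function of $(\mu,\theta)$. A secondary point requiring care is the compatibility condition flagged in Remark \ref{rem-criterion}: one must confirm that $\theta$ does not vary in directions orthogonal to $T$. This follows exactly as in the proof of Theorem \ref{conformally}, since the second identity in (\ref{Xcos}) gives $X[\cos\theta] = -\langle X, ST\rangle = -\lambda\langle X, T\rangle = 0$ for $X$ orthogonal to $T$. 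Once both are in place, the rotational conclusion follows immediately.
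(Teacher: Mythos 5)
Your proposal is correct and follows essentially the same route as the paper: compute $\mathrm{Ric}$ in the principal frame via the Gauss equation, use $\nabla_X T=\cos\theta\,SX$ to get $\tfrac12\mathcal{L}_T g=\cos\theta\,h$, and read off a functional relation $\lambda=\lambda(\mu,\theta)$ to feed into Theorem \ref{conformally}\,ii). The only (harmless) difference is that you subtract the two soliton equations to eliminate $c$, whereas the paper simply uses the single equation in the directions orthogonal to $T$ --- since $c$ is a constant, that equation alone already yields the required relation; the solvability-for-$\lambda$ caveat you raise is real but is glossed over in the paper as well.
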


\begin{proof}
Since $M^n$ is a conformally flat hypersurface, not totally umbilical, Theorem \ref{conformally} tells that the shape operator of $M^n$ has two distinct eingenvalues $\lambda$ and $\mu$, of multiplicity $1$ and $n-1$, respectively. Let $e_i$, $1 \leq i \leq n-1$, eigenvectors associated to $\mu$. Since $ST=\lambda T$, it follows by Gauss Equation (\ref{gauss}) that 
\begin{equation}
\textnormal{Ric}(e_i, e_i) = (n-2) (\mu^2 + \varepsilon) + \varepsilon \cos^2 \theta + \lambda \mu. \label{ricci-ei}
\end{equation}
On the other hand, it follows by the first Equation of (\ref{Xcos}) that $\nabla_{e_i} T = \mu \cos \theta e_i$. Consequently 
\begin{equation}
\mathcal{L}_T (e_i, e_i) = 2 \mu \cos \theta. \label{lie-ei}
\end{equation}
By considering (\ref{ricci-ei}) and (\ref{lie-ei}) in Equation (\ref{ricci-soliton}), with potential vector field is $T$, one has
\begin{equation}
\mu \cos \theta + (n-2) (\mu^2 + \varepsilon) + \varepsilon \cos^2 \theta + \lambda \mu = c. \label{ricci-soliton-ei}
\end{equation}
Equation (\ref{ricci-soliton-ei}) tells that $\lambda = \lambda(\mu, \theta)$. By Theorem \ref{conformally}, one has that $M^n$ is a rotation hypersurface.
\end{proof}

One of the main argument in the proof of Corollary \ref{cor-soliton} is the Equation (\ref{ricci-soliton-ei}), which shows the functional dependence between $\lambda$, $\mu$ and $\theta$. Therefore, intrinsic properties that imply in such dependence can also be used to show that a conformally flat hypersurface is rotational. In the next corollary, we consider conformally flat hypersurfaces with constant scalar curvature.

\begin{cor}
Let $M^n$, $n>3$, be a conformally flat hypersurface in $Q^n(\varepsilon) \times \mathbb{R}$, not totally umbilical, with constant scalar curvature. If $T$ is an eigenvector of the shape operator, associated to an eingenvalue of multiplicity $1$, then $M^n$ is rotational. \label{cor-scalar}
\end{cor}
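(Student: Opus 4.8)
The plan is to follow closely the argument used for Corollary \ref{cor-soliton}, replacing the Ricci soliton equation by the constancy of the scalar curvature as the mechanism that produces the functional dependence $\lambda = \lambda(\mu,\theta)$. Since $M^n$ is conformally flat and not totally umbilical, Theorem \ref{conformally} guarantees that its shape operator has exactly two distinct eigenvalues $\lambda$ and $\mu$, of multiplicities $1$ and $n-1$, respectively; by hypothesis $T$ is an eigenvector for the simple eigenvalue, so that $ST = \lambda T$. I fix an orthonormal frame $\{e_1,\dots,e_{n-1},e_n\}$ in which $e_n = T/\|T\|$ and $e_1,\dots,e_{n-1}$ are unit eigenvectors associated to $\mu$.

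Next I would compute the scalar curvature $\tau$ of $M^n$ from the Gauss equation (\ref{gauss}). The components $\mathrm{Ric}(e_i,e_i)$, $1 \le i \le n-1$, are already recorded in (\ref{ricci-ei}), while (\ref{semi-gauss}) gives $\langle R(e_i,T)T,e_i\rangle = \|T\|^2(\lambda\mu + \varepsilon\cos^2\theta)$ and hence, dividing by $\|T\|^2 = \sin^2\theta$ and summing over $i$, $\mathrm{Ric}(e_n,e_n) = (n-1)(\lambda\mu + \varepsilon\cos^2\theta)$. Adding the two contributions yields
\begin{equation}
\tau = (n-1)\left[ (n-2)(\mu^2 + \varepsilon) + 2\varepsilon\cos^2\theta + 2\lambda\mu \right].
\end{equation}
Equivalently, one obtains this by tracing the Gauss equation twice, via $\tau = (\mathrm{tr}\,S)^2 - \mathrm{tr}(S^2) + \varepsilon(n-1)(n-2+2\cos^2\theta)$, together with $\mathrm{tr}\,S = \lambda + (n-1)\mu$ and $\mathrm{tr}(S^2) = \lambda^2 + (n-1)\mu^2$.

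Now I would impose that $\tau$ equals a constant $\tau_0$. On the open set where $\mu \neq 0$ this relation can be solved for $\lambda$, giving
\begin{equation}
\lambda = \frac{1}{2\mu}\left[ \frac{\tau_0}{n-1} - (n-2)(\mu^2 + \varepsilon) - 2\varepsilon\cos^2\theta \right],
\end{equation}
which exhibits exactly the functional dependence $\lambda = \lambda(\mu,\theta)$ required by item ii) of Theorem \ref{conformally}. Since $ST = \lambda T$, the second identity in (\ref{Xcos}) gives $X[\cos\theta] = -\langle X, ST\rangle = 0$ for every $X$ orthogonal to $T$, so $\theta$ does not vary in directions orthogonal to $T$; then Theorem \ref{conformally} together with Remark \ref{rem-criterion} lets me conclude that $M^n$ is locally congruent to a rotation hypersurface.

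The main obstacle I anticipate is the locus where the multiplicity-$(n-1)$ eigenvalue $\mu$ vanishes, where the displayed expression for $\lambda$ degenerates: there the constancy of $\tau$ only forces $\cos^2\theta$ to be constant, so $M^n$ is a constant angle hypersurface and one must either argue separately (invoking the known classification of constant angle hypersurfaces, for which $T$ is automatically a principal direction) or restrict the conclusion to the dense open set $\{\mu \neq 0\}$, precisely as is implicitly done in the proof of Corollary \ref{cor-soliton}.
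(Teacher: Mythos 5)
Your proposal is correct and follows essentially the same route as the paper: compute the scalar curvature from the Gauss equation in the adapted frame, obtaining $\rho = (n-1)(n-2)(\mu^2+\varepsilon) + 2(n-1)(\lambda\mu + \varepsilon\cos^2\theta)$, and read off the functional relation $\lambda=\lambda(\mu,\theta)$ from its constancy, then invoke Theorem \ref{conformally}. Your added remark about the locus $\mu=0$, where solving for $\lambda$ degenerates, is a legitimate point of care that the paper's proof passes over silently.
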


\begin{proof}
The proof is similar to Corollary \ref{cor-soliton}. If $M^n$ has two distinct eingenvalues $\lambda$ and $\mu$, of multiplicity $1$ and $n-1$, respectively, with $ST=\lambda T$, a straightforward computation shows that the scalar curvature is given by
\begin{equation}
\rho = (n-1)(n-2)(\mu^2 + \varepsilon) + 2(n-1)(\lambda \mu + \varepsilon cos^2 \theta). \label{scalar-curvature}
\end{equation}
If $\rho$ is constant, one has $\lambda = \lambda (\theta, \mu)$. It follows by Theorem \ref{conformally} that $M^n$ is a rotation hypersurface.
\end{proof}

Let us observe that the vector field $T$ is actually a gradient vector field. In fact, if we express a point $p \in M^n$ as $p=(\varphi, h) \in  Q^n(\varepsilon) \times \R$, then $T$ is the gradient of the height function $h$. In this way, the Ricci soliton given in Corollary \ref{cor-soliton} is a gradient Ricci soliton. 

A gradient Ricci soliton is rigid if it is isometric to a quotient $ N \times_{\Gamma} \R^k$ where $N$ is an Einstein manifold, $f = \frac{c}{2}|x|^2$ on the Euclidean factor and $\Gamma$ acts freely on $N$ and by orthogonal transformations on $\R^k$ (\cite{petersen2, petersen1}). In \cite[Theorem 1.2]{petersen1}, Petersen and Wylie proved that a a gradient Ricci soliton $\textnormal{Ric} + \textnormal{Hess}_f = cg$  is rigid if, and only if, it has constant scalar curvature and the sectional curvatures $K(X, \nabla f)=0$, for any vector field.

As a consequence of Theorem 2, we obtain when a hypersurface in $Q^n(\varepsilon) \times \R$ is a rigid gradient Ricci soliton:

\begin{cor}
Let $M^n$, $n>3$, be a Ricci soliton in $Q^n(\varepsilon) \times \R$ with potential vector field $T$ and constant scalar curvature. If $M^n$ is a rigid gradient Ricci soliton, and $T$ is a principal direction for a principal curvature $\lambda \neq 0$, then $M^n$ is a semi-parallel hypersurface. Conversely,
If $M^n$ is a semi-parallel, not totally umbilical hypersurface, then $M^n$ is a rigid gradient Ricci soliton.
\end{cor}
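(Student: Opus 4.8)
The plan is to observe that both implications reduce, via the Petersen--Wylie characterization of rigidity, to the equivalence between radial flatness and semi-parallelism already established in Theorem \ref{radially}. The key point is the remark made just before the statement: the potential vector field $T$ is the gradient of the height function $h$, so $M^n$ is automatically a \emph{gradient} Ricci soliton with $\nabla f = T$. Under this identification, the radial-curvature condition $K(X,\nabla f)=0$ appearing in \cite[Theorem 1.2]{petersen1} is literally the condition $K_M(X,T)=0$ defining a radially flat hypersurface. Since the standing hypotheses already supply ``Ricci soliton with potential $T$'' and ``constant scalar curvature,'' each direction of the corollary amounts to toggling the single remaining attribute through this dictionary.

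For the direct implication, I would first use $\nabla f = T$ and apply the rigidity criterion of \cite[Theorem 1.2]{petersen1}: as $M^n$ is a rigid gradient Ricci soliton, it satisfies $K_M(X,T)=0$ for every tangent $X$, so $M^n$ is radially flat. The hypothesis that $T$ is a principal direction for a principal curvature $\lambda \neq 0$ then puts us exactly in the situation of the first part of Theorem \ref{radially}, which yields that $M^n$ is a semi-parallel (in fact rotation) hypersurface.

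For the converse, I would run the same chain backwards. Assuming $M^n$ is semi-parallel and not totally umbilical, the second part of Theorem \ref{radially} gives that $M^n$ is radially flat, that is, $K_M(X,T)=0$ for all $X$; rewriting this as $K(X,\nabla f)=0$ via $T=\nabla h$ and combining it with the standing assumption of constant scalar curvature, the ``if'' direction of \cite[Theorem 1.2]{petersen1} forces $M^n$ to be a rigid gradient Ricci soliton. Note that the $\lambda \neq 0$ assumption is not needed in this direction, consistently with the way the statement is phrased.

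There is no serious computational obstacle here, since everything is assembled from results already in hand. The one point demanding care is the translation step itself: one must check that the radial-curvature hypothesis of Petersen and Wylie and the radial flatness of $M^n$ genuinely coincide, which requires the identification $T=\nabla h$ together with the fact that the soliton is of gradient type (so that the gradient form of the rigidity theorem, rather than the general soliton equation, is the correct tool). Once this is in place, the two halves of the statement follow immediately from the corresponding halves of Theorem \ref{radially}.
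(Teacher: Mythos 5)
Your argument is correct and follows essentially the same route as the paper: identify $T=\nabla h$ so the soliton is of gradient type, invoke the Petersen--Wylie criterion that rigidity is equivalent to constant scalar curvature together with radial flatness, and then apply the two halves of Theorem \ref{radially}. The paper's own proof is exactly this chain, stated more tersely.
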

\begin{proof}
Since $T$ is a gradient vector field, $M^n$ is a gradient Ricci soliton. By \cite[Theorem 1.2]{petersen1} $M^n$ is rigid if, and only if, it has constant scalar curvature and is radially flat. Consequently, the proof follows directly by Theorem 2. 
\end{proof}

%
%
%
%
%
%\section{Some more list types}
%This is an example of a bulleted list.
%
%\begin{itemize}
%\item $\mathcal{J}_g$ of dimension $3g-3$;
%\item $\mathcal{E}^2_g=\{$Pryms of double covers of $C=\openbox$ with
%normalization of $C$ hyperelliptic of genus $g-1\}$ of dimension $2g$;
%\item $\mathcal{E}^2_{1,g-1}=\{$Pryms of double covers of
%$C=\openbox^H_{P^1}$ with $H$ hyperelliptic of genus $g-2\}$ of
%dimension $2g-1$;
%\item $\mathcal{P}^2_{t,g-t}$ for $2\le t\le g/2=\{$Pryms of double
%covers of $C=\openbox^{C'}_{C''}$ with $g(C')=t-1$ and $g(C'')=g-t-1\}$
%of dimension $3g-4$.
%\end{itemize}
%
%This is an example of a `description' list.
%
%\begin{description}
%\item[Zero case] $\rho(\Phi) = \{0\}$.
%
%\item[Rational case] $\rho(\Phi) \ne \{0\}$ and $\rho(\Phi)$ is
%contained in a line through $0$ with rational slope.
%
%\item[Irrational case] $\rho(\Phi) \ne \{0\}$ and $\rho(\Phi)$ is
%contained in a line through $0$ with irrational slope.
%\end{description}

\bibliographystyle{amsplain}

\begin{thebibliography}{10}


\bibitem{aledo1} J. A. Aledo, J. M. Espinar and J. A. Gálvez, \emph{Complete surfaces of constant curvature in $\mathbb{H}^2\times\R$ and $\Sf^2\times\R$.} Calc. Var. Partial Differential Equations 29 (2007), no. 3, 347--363.

\bibitem{aledo2} J. A. Aledo, J. M. Espinar, José M and J. A. Gálvez,  \emph{Surfaces with constant curvature in $\Sf^2 \times \R$ and $\Hi^2\times \R$. Height estimates and representation.} Bull. Braz. Math. Soc. (N.S.) 38 (2007), no. 4, 533--554.

\bibitem{cartan} E. Cartan, \emph{La déformation des hypersurfaces dans l'espace conforme réel à $n \ge 5$ dimensions}. (French) Bull. Soc. Math. France 45 (1917), 57--121. 

\bibitem {Veken2} G. Calvaruso, D. Kowalczyk and J. van der Veken, \textit{On extrinsically symmetric hypersurfaces of $\mathbb{H}^n \times \mathbb{R}$},
Bull. Aust. Math. Soc. 82 (2010), 390–400.

\bibitem{carmo} M. do Carmo, M and M. Dajczer, \emph{Rotation hypersurfaces in spaces of constant curvature}, Trans. Amer. Math. Soc. 277 (1983), no. 2, 685--709.

\bibitem{garciario} E. Calvino-Louzao, M. Fernandez-Lopez, E. Garcia-Rio and R. Vazquez-Lorenzo, \textit{Homogeneous Ricci almost solitons}, arXiv:1501.05224v1, (2015).

\bibitem{cao} H.-D. Cao, \emph{Recent progress on Ricci solitons}, Recent advances in geometric analysis, 1--38, Adv. Lect. Math. (ALM), 11, Int. Press, Somerville, MA, 2010.

\bibitem{chen} B. Y. Chen, \emph{A survey on Ricci solitons on Riemannian submanifolds}, Recent advances in the geometry of submanifolds - dedicated to the memory of Franki Dillen (1963–2013), 27--39, Contemp. Math., 674, Amer. Math. Soc., Providence, RI, 2016.

\bibitem{cho} J. T. Cho and M. Kimura, \textit{Ricci solitons on locally conformally flat hypersurfaces in space forms}. J. Geom. Phys. 62 (2012), no. 8, 1882--1891. 

\bibitem {Veken3} G. Dillen, J. Fastenakels and J. van der Veken, \textit{Rotation hypersurfaces in $\mathbb{S}^n \times \mathbb{R}$ and $\mathbb{H}^n \times \mathbb{R}$},
Note Mat. 29 (2009), n. 1, 41-54.

\bibitem{Veken4} F. Dillen, J. Fastenakels and J. Van der Veken, \emph{Surfaces in $\Sf^2\times \R$ with a canonical principal direction}. Ann. Global Anal. Geom. 35 (2009), no. 4, 381--396.

\bibitem{dillen} F. Dillen, M. Munteanu and A.-I. Nistor, \emph{Canonical coordinates and principal directions for surfaces in $\Hi^2 \times \R$}. Taiwanese J. Math. 15 (2011), no. 5, 2265--2289.

\bibitem{dillen2} F. Dillen, J. Fastenakels, J. Van der Veken and L. Vrancken, \emph{Constant angle surfaces in $\Sf^2 \times \R$}. Monatsh. Math. 152 (2007), no. 2, 89--96.

\bibitem{dillen3} F. Dillen and M. I. Munteanu, \emph{Constant angle surfaces in $\Hi^2 \times \R$}. Bull. Braz. Math. Soc. (N.S.) 40 (2009), no. 1, 85--97. 

\bibitem{garnica} E. Garnica, O. Palmas and G. Ruiz-Hernández, \emph{Hypersurfaces with a canonical principal direction}, Differential Geom. Appl. 30 (2012), no. 5, 382--391.

\bibitem{jeromin} U. Hertrich-Jeromin, \textit{Introduction to Möbius differential geometry}, London Mathematical Society Lecture Note Series, 300. Cambridge University Press, Cambridge, 2003. 

\bibitem{kuhnel} W. Kuhnel, \textit{Conformal transformations between Einstein spaces}. Conformal geometry (Bonn, 1985/1986), 105--146, Aspects Math., E12, Friedr. Vieweg, Braunschweig, 1988.

\bibitem{lafontaine} J. Lafontaine, \textit{Conformal geometry from the Riemannian viewpoint}, in Conformal Geometry (Bonn, 1985/1986), Aspects Math., Vol. E12, Vieweg, Braunschweig, (1988), 65-92.

\bibitem{manfio} F. Manfio and R. Tojeiro, \textit{Hypersurfaces with constant sectional curvature of $\Sf^n \times \R$ and $\Hi^n \times \R$}. Illinois J. Math. 55 (2011), no. 1, 397--415 (2012).

\bibitem {Nishikawa} S. Nishikawa and Y. Maeda, \textit{Conformally flat hypersurfaces in a conformally flat Riemannian manifold},
Tohoku Math. Journ. 26(1974), 159-168.

\bibitem{petersen2} P. Petersen and W. Wylie, \textit{On gradient Ricci solitons with symmetry}, Proc. Amer. Math. Soc. 137 (2009), n. 6, 2085-2092.

\bibitem{petersen1} P. Petersen and W. Wylie \textit{Rigidity of gradient Ricci solitons}, Pacific J. Math. 241 (2009), n. 2 329-345.

\bibitem{joao} J. P. dos Santos and K. Tenenblat, \emph{The symmetry group of Lame's system and the associated Guichard nets for conformally flat hypersurfaces}. SIGMA Symmetry Integrability Geom. Methods Appl. 9 (2013), Paper 033, 27 pp. 

\bibitem{tojeiro} R. Tojeiro, \textit{On a class of hypersurfaces in $\Sf^n \times \R$ and $\Hi^n \times \R$}, Bull. Braz. Math. Soc. (N.S.) 41 (2010), n. 2, 199-209.

\bibitem{Veken1} J. Van der Veken and L. Vrancken, \textit{Parallel and semi-parallel hypersurfaces of $\mathbb{S}^n \times \mathbb{R}$}, Bull Braz Math Soc, New Series 39(3), 355-370.





 



\end{thebibliography}

\end{document}